\newtheorem{theorem}{\noindent Theorem}
\newtheorem{lemma}{\noindent Lemma}
\newtheorem{definition}{\noindent Definition}
\newtheorem{corollary}{\noindent Corollary}
\newtheorem{statement}{\noindent Proposition}
\newtheorem{remark}{\noindent Remark}
\newtheorem{problem}{\noindent PROBLEM}
\newtheorem{definition-theorem}{\noindent Definition-Theorem}
\vershik\url{vershik@pdmi.ras.ru}
\author{A.~M.~Vershik\thanks{
St.~Petersburg Department of Steklov Institute of Mathematics and
Max Plank Institute Bonn. E-mail: \vershik. Partially supported by the
RFBR-grant 11-01-00677-a; RFBR-grant 11-01-12092 OFI-M }}
\title{\textbf{TOTALLY NONFREE ACTIONS AND THE INFINITE SYMMETRIC GROUP}}
\begin{document}
\maketitle

\rightline{\it \textbf{To the memory of my beloved Lyotya}}

\bigskip

\centerline{\textbf{ABSTRACT}}

We consider the totally nonfree (TNF) action of a groups and the corresponding
adjoint invariant (AD) measures on the lattices of the subgroups of the given group.
 The main result is the description of all adjoint-invariant and TNF-measures on the lattice
of subgroups of the infinite symmetric group $S_{\Bbb N}$. The problem
is closely related to the theory of characters and factor representations of groups.

\section{INTRODUCTION}

   The main result of this paper is a precise description of all
   adjoint-invariant ergodic probability Borel measures on the lattice
   of all subgroups of the infinite symmetric group $S_{\Bbb N}$. The
   reason why problems of this type are of importance can be
   briefly formulated as follows: the adjoint action of a group on
   the lattice of subgroups with an adjoint-invariant probability
   measure produces, in a certain way,
   a nontrivial character of the group and, consequently, determines a
   special factor representation of the group.

   In the case of the infinite symmetric group, it turns out that this
   method yields, {\it all characters of $S_{\Bbb N}$}
   This phenomenon was first observed in \cite{VK}, as a
   particular fact related to a certain model of factor representations
   of the infinite symmetric group. The list of characters itself is
   well-known; E.Thoma posed and gave the first solution of the problem (\cite{Th}),
   the new proof of this theorem, which used the ergodic approach and approximation
   suggested in \cite{VK1}. This proof was based on the ideas of the dynamical approach,
   and approximation of the characters of infinite group with the characters of finite symmetric
    group. The same dynamical approach, namely, so-called grouppoid model, gives the realization of
   corresponding factor-representations of type $II_1$. But in this paper for us more important, that
   the value of an indecomposable character at a given element of the group
   is equal, (up to a certain factor), to the measure of the set of
   fixed points of this element for some special action.
   The most important thing is that precise link between Thoma's parameters of the characters
   and parameters of the measure both are the same.

   But what kind of the actions can appear in this construction? This is what we define in this paper:
   it turns out that these are so-called {\it totally nonfree} ({\it
   TNF}) actions, so it is important to describe such actions for a given group.

   In this paper, we go in the opposite direction: we start with definition and
   studying of the class of TNF actions of a group. From the point of view of ergodic
   theory, this kind of actions is of great interest, and, as far as
   we know, it has not yet been systematically investigated.
   Due to the lack of space, we decided to separate the discussion of
   the link between the questions considered here and the
   theory of representations, characters,
   and factors; these topics will be treated in another article.

  In the first part of paper (the second section), we introduce the main notions and fix
  definitions related to nonfree and totally nonfree actions. We
  develop a systematic approach to nonfree actions. Although we are
  mainly interested in totally nonfree actions, we also consider
  intermediate cases and the reduction of a general action to a TNF
  action.
  The main open problem that arises in this context concerns the {\it
  existence and the list of TNF-actions for a given group}. We use the
  language of the {\it lattice of subgroups and the adjoint action} of
  the original group on this lattice. The previous question is equivalent to that of the
  existence or nonexistence of continuous adjoint-invariant measures.
  The universal example of a TNF action is the adjoint action of the
  group on the lattice of its subgroups with a TNF measure, i.e.,
   a measure concentrated on the set of self-normalizing subgroups (=
  subgroups coinciding with their normalizers). An important fact
  asserts that this measure is a complete metric invariant of the
  action.  In general, there are other adjoint-invariant measures that
  are not TNF; for example so called RTNF-measures which also produced
  TNF action. We describe so called canonical skew-product of any action
  and sequence (which may be infinite of even transfinite) of reduced
  actions.

  All these facts heavily depend on the properties of the group. We consider
  here only countable groups  Of course, for the problem to be
  interesting, the group should have a continuum of subgroups. It is
  clear that for many groups, f.e for groups close to
  abelean ones, such a measure does not exist. But for
  some ``large'' groups,  we have many TNF measures
  (or many TNF actions), which, fortunately, can be listed up to isomorphism (in
  contrast to the usual situation in ergodic theory). It is natural to consider adjoint measures
  on the lattice of subgroups as ``random subgroups''; the notion of a random subgroup was considered
  in \cite{AGV,IRS,LB} but with the different kind of applications.
  The author believes that this question is also of interest within ergodic theory itself.

    In the second part of the paper (the third section), we study the case of the infinite symmetric
    group. We use a fundamental classical fact about its
    subgroups; namely, the infinite symmetric group has no primitive subgroups except the alternating
  subgroup and the whole group itself. This follows from a classical theorem
  due to C.~Jordan (H.~Wielandt was perhaps the first to observe this, see \cite{DM}).
  We successively exclude from consideration all other subgroups
  that cannot lie in the support of an adjoint-invariant measure and
  reduce the question to a de Finetti-like problem and to Kingman's theorem about random partitions of the naturals.
  The description of adjoint-invariant measures on the group $S_{\Bbb N}$ relies on the {\it new important generalization of the
  classical notion of Young subgroup, - namely, a signed Young subgroup; it is natural to understand a
  random signed Young subgroup exactly as a random subgroup whose
  distribution is an adjoint-invariant measure on $L(S_{\Bbb N})$}. The list of parameters $\alpha$ for these
  measures is exactly the list of Thoma's parameters. We briefly
  compare our formula with that of Thoma at the end of the paper; in a
  sense, our list of adjoint-invariant measures can be regarded
  as an explanation for the list of characters. We will return to this question elsewhere.

   The direct proof of the  TNF-measures for infinite symmetric group with ergodic method
   perhaps gives us a new proof of the list of the characters of this group.
    The conception presented here partially is based on the paper \cite{VK}, but the general approach and link to the theory of characters
    is new one, it was proclaimed firstly in the authors' talk in the Henri Poincare institute \cite{IHP}. The short announcement
    of the approach can be found in \cite{TNF}.

     Some papers on the different topics are tightly related to out topic: the papers by R.~Grigorchuk and his colleagues \cite{Grigor,Nagni} contain examples of TNF actions of groups acting on trees. Also, papers by L.~Bowen \cite{LB}, found such examples of nontrivial AD-measures for the free group. The papers \cite{AGV,IRS} devoted to IRS = invariant random subgroups or AD-measures on the lattices in our terms but the goals are different.

    As it known for author the explicit description of the list of all AD-measures and TNF-actions for the group $S_{\Bbb N}$
    which we present here, is the first result of his type. Perhaps, this methods can also be applied to other groups similar to
    $S_{\Bbb N}$, such as the group of rational interval exchange, $U(\infty)$ (the infinite unitary group), $GL(F_q,\infty)$, etc.
    It turns out that our answer is even more tightly connected to group-theoretic structure and to the theory of characters,
    than it can be assumed before; we will apply it to the theory of characters and factor-representations of $S_{\Bbb N}$ and
    other groups in a subsequent paper.

    Professors  M.Abert, L.Bowen, Y.Glasner, R.Grigorchuk, Y.Guivarch, N.Gordeev, W.Knapp, T.Nagnibeda-Smirnova, G.Olshansky, M.Zischang
     gave me the important references on the subject. I am grateful to Natalia Tsilevich for her help in editing of this article.

\section{\textbf{MAIN DEFINITIONS. TOTALLY NONFREE ACTIONS}}

\subsection{FIXED POINTS, STABILIZERS, AND SUB-$\sigma$-FIELDS}

Let $(X,{\frak A},\mu)$ be a Lebesgue space with a probability measure
$\mu$ defined on a $\sigma$-field ${\frak A}$ of classes
of $\bmod 0$ coinciding measurable sets, and let a countable group $G$
act on this space by $\mu$-preserving transformations. We will consider
only {\it effective actions}, which means that only the identity $e \in G$ of the
group acts as the identity transformation $\bmod 0$. Because of that we denote by the same
letter an element of the group ($g \in G$) and the corresponding automorphisms ($g:x\mapsto gx$)
 of the space $(X,\mu)$.

For each element $g\in G$, we define a measurable set
$X_g$ called the {\it set of fixed points} of $g$:
$$X_g=\{x\in X, \quad gx=x\}.$$
 Consider the map $$\Phi: G \to {\frak A}; \quad \Phi:g
\mapsto X_g.$$

\begin{definition} The fixed point $\sigma$-field corresponding to the
action of $G$ under consideration is the sub-$\sigma$-field ${\frak A}_G$ of the
$\sigma$-field $\frak A$ generated by the family of all sets $X_g$,  $g\in G$.
\end{definition}

The sets $X_g$ are well defined for arbitrary actions of countable
groups and, more generally, for {\it pointwise, or measurable actions} of arbitrary
groups.\footnote{Recall that an action of a group $G$ is called pointwise
(or measurable) if there is a measurable set of full measure on which the action of $G$
is defined; an action is called continuous
(in Rokhlin's terminology; the other term is $\bmod 0$-action)
if a homomorphism $G\rightarrow\operatorname{Aut}(X,\mu)$ is defined,
where  $\operatorname{Aut}(X,\mu)$ is the group of all classes of
measure-preserving
transformations of $(X,\mu)$. For countable groups, as well
for locally compact groups, these two
notions are equivalent.}
It is worth mentioning that the above definition of the  $\sigma$-field ${\frak A}_G$
applies to {\it continuous actions of arbitrary groups}, since
 the set of fixed points for a given automorphism is well defined
 with respect to $\bmod 0$: if $g_1=g_2 \bmod 0$ (as the automorphisms of the space $(X,\mu)$),
 then $X_{g_1}=X_{g_2}\bmod 0$.

\begin{remark} {\rm An action of a group $G$ is called free
if $\mu X_g=0 $ for all $g \ne{\rm Id}$, $g \in G$, or, in short, if the
$\sigma$-field ${\frak A}_G$ is trivial (the trivial $\sigma$-field
will be denoted by $\frak N$).}
\end{remark}

For pointwise actions, we can define the notion of the {\it
stationary subgroup}, or the {\it stabilizer},
of a point $x \in X$:
$$G_x=\{g \in G: gx=x\}.$$
It is clear that if $y=hx$ with $y,x \in X_0$, $h\in G$,
then $G_y=h^{-1}G_xh$. In general, this notion is not well defined for
uncountable groups; more exactly, it can be defined only if one can
introduce the notion
of the orbit partition.

Now we are going to define another sub-$\sigma$-field of the $\sigma$-field $\frak A$ in the space $X$.
We start with the following definition.

\begin{definition} Consider the partition $\xi_G$ of the space $X$
into the classes of points having the same stabilizer.
We call it the iso-stable partition of the triple $(X,G,\mu)$.
\end{definition}

 The iso-stable partition $\xi_G$ is measurable, because it is the
 limit, over an increasing sequence of finite subsets $K_n\subset G$,
 $\bigcup_n K_n=G$, of measurable partitions $\xi_G^{K_n}$: $\xi_G=\lim_n \xi_G^{K_n}$, where two points $x,y\in X$ belong to the same block of $\xi_G^{K_n}$
 if and only if  $K\subset G_x$, $K\subset G_y$. The partition $\xi_G$
 is obviously $G$-invariant, because an element of
$\xi_G$ consists of all points that have the same
stabilizers.

\begin{definition}
Let ${\frak A}^G$ be
the sub-$\sigma$-field of $\frak A$ that consists of
all sets measurable with respect to the iso-stable partition $\xi_G$.
In the quotient space $X/\xi_G$, we have a well-defined action of the group $G$
with invariant quotient (projection) measure $\mu_{\xi_G}$;
the action of $G$ on  $(X/\xi_G,\mu_{\xi_G})$ will be called the reduced action.
\end{definition}

\begin{statement}
Assume that there is a pointwise action of a group $G$ on a space $(X,\mu)$ with an
invariant measure $\mu$.
Both sub-$\sigma$-fields ${\frak A}_G$ and ${\frak A}^G$ are
$G$-invariant, and
the following inclusion holds:
$${\frak A}_G\subset {\frak A}^G.$$
 For a countable group $G$, both sub-$\sigma$-fields coincide:
$${\frak A}_G ={\frak A}^G \equiv {\frak A}(G).$$
\end{statement}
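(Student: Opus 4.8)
The plan is to establish the three assertions separately: the $G$-invariance of ${\frak A}_G$ and of ${\frak A}^G$, the inclusion ${\frak A}_G\subseteq{\frak A}^G$ for an arbitrary pointwise action, and the reverse inclusion ${\frak A}^G\subseteq{\frak A}_G$ when $G$ is countable, which is where the hypothesis is really used. For the invariance of ${\frak A}_G$ I would compute the action of $h\in G$ on the generators directly: from $gx=x$ one gets $(hgh^{-1})(hx)=hx$, and conversely, so $hX_g=X_{hgh^{-1}}$ (for a continuous action this holds mod $0$, using the remark that $g_1=g_2$ mod $0$ implies $X_{g_1}=X_{g_2}$ mod $0$). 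Thus conjugation merely permutes the family $\{X_g:g\in G\}$ and hence fixes the $\sigma$-field it generates. For ${\frak A}^G$ one uses that $\xi_G$ is $G$-invariant — an element $h$ sends the block $\{x:G_x=H\}$ onto $\{x:G_x=hHh^{-1}\}$ — so $h$ carries $\xi_G$-saturated sets to $\xi_G$-saturated sets, which is exactly the invariance of ${\frak A}^G$.

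For ${\frak A}_G\subseteq{\frak A}^G$ it suffices, since ${\frak A}^G$ is a $\sigma$-field, to check that each generator $X_g$ lies in ${\frak A}^G$. Now $x\in X_g$ is equivalent to $g\in G_x$; hence if $x$ and $y$ lie in the same block of $\xi_G$, i.e. $G_x=G_y$, then $x\in X_g\Leftrightarrow y\in X_g$. So $X_g$ is a union of blocks of $\xi_G$ and, being measurable, it is measurable with respect to $\xi_G$, i.e. $X_g\in{\frak A}^G$.

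For the reverse inclusion when $G$ is countable, I would use the description of $\xi_G$ recalled just before the statement. Fix an increasing exhaustion $K_1\subseteq K_2\subseteq\cdots$ of $G$ by finite sets, and let $\xi_G^{K_n}$ be the finite partition in which the block of $x$ is determined by the subset $G_x\cap K_n$ of $K_n$; thus that block is the set $\bigcap_{g\in T}X_g\setminus\bigcup_{g\in K_n\setminus T}X_g$ for the appropriate $T\subseteq K_n$, a finite Boolean combination of the sets $X_g$, $g\in K_n$, and so it lies in ${\frak A}_G$. Hence ${\frak A}(\xi_G^{K_n})\subseteq{\frak A}_G$ for every $n$. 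Since $\bigcup_n K_n=G$, two points have the same stabilizer iff they lie in the same block of $\xi_G^{K_n}$ for all $n$, i.e. $\xi_G=\bigvee_n\xi_G^{K_n}$; and by Rokhlin's theory of measurable partitions of a Lebesgue space, the $\sigma$-field of a monotone limit of finite partitions equals, mod $0$, the join of the corresponding $\sigma$-fields. Therefore ${\frak A}^G={\frak A}(\xi_G)=\bigvee_n{\frak A}(\xi_G^{K_n})\subseteq{\frak A}_G$ mod $0$, which together with the previous inclusion gives ${\frak A}_G={\frak A}^G$.

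The only genuine points requiring care are the identification $G_x=\{g:x\in X_g\}$, which needs the action to be pointwise so that stabilizers make sense (automatic for countable $G$, where one may pass freely between the pointwise and the continuous pictures), and the mod-$0$ bookkeeping in the last paragraph — in particular the convention that being measurable with respect to $\xi_G$ means being equal mod $0$ to a saturated set, together with the standard fact about $\sigma$-fields of monotone limits of measurable partitions. No deeper obstacle arises; the countability of $G$ enters solely through the finiteness of the approximating sets $K_n$.
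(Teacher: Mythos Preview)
Your proof is correct and follows essentially the same route as the paper's: showing that the generators $X_g$ are $\xi_G$-saturated gives the inclusion, and for countable $G$ you exhibit the atoms of $\xi_G^{K_n}$ as finite Boolean combinations of the $X_g$'s, exactly as the paper does (more tersely) when it says a basis for ${\frak A}^G$ is given by the sets $Y_K=\bigcap_{g\in K}X_g$. You are in fact more thorough than the paper, which leaves the $G$-invariance assertion unproved; your computation $hX_g=X_{hgh^{-1}}$ and the analogous remark for the blocks of $\xi_G$ supply this missing point.
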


\begin{proof}
The first claim is trivial: two points that cannot be separated by
their fixed point sets
have the same stabilizers.
By definition, the $\sigma$-field ${\frak A}_G$ is generated by the family
of sets $X_g$, $g \in G$. But, since the group $G$ is countable,
a basis of the $\sigma$-field ${\frak
A}^G$ consists of the sets
$$Y_K=\cap_{g \in K} X_g,$$
where $K \subset\operatorname{Stab} (x)\subset G$ is an arbitrary
finite set. Thus the
family $X_g$, $g\in G$, generates both $\sigma$-fields in question.
\end{proof}

For continuous groups, the sub-$\sigma$-fields in question do not
coincide in general. For instance, considering the action of the orthogonal group $SO(3)$ on the
projective space $RP_2$, we see that in this case ${\frak A}_G
\subsetneqq {\frak A}^G$. Indeed,
each set of fixed points has zero measure, whence ${\frak A}_G=\frak N$
(where $\frak N$ is the trivial $\sigma$-field),
while ${\frak A}^G=\frak A$ since the set of all rotations
separate the points of $P_2R$.

\subsection{THE LATTICE OF SUBGROUPS AND THE ADJOINT ACTION}

 Denote by $L(G)$ the set of all subgroups of a locally compact group
 $G$ and equip it with the natural weak topology\footnote{A
 neighborhood of a subgroup in the weak
topology is the set of subgroups that have the same intersection
with a given compact subset of $G$. For a discrete group,
$L(G)$ is a subspace of the compact space of all subsets of $G$.}
and the corresponding Borel structure.
For a countable group, the space $L(G)$ equipped with this topology is a compact (Cantor) space. The {\it adjoint action} of the group $G$ on $L(G)$
is defined as follows. Let $g \in G$, $H \in L(G)$; then
$$\operatorname{Ad}(g)H= gHg^{-1}.$$
We will study the dynamical system $(L(G), \operatorname{Ad}(G))$ from
the point of view of ergodic theory; namely, we will consider
$\operatorname{Ad}(G)$-invariant Borel measures. The key problem is
the existence of continuous (nonatomic) invariant measures.

\begin{problem}
For what groups do there exist continuous
$\operatorname{Ad}(G)$-invariant Borel probability measures? Describe
all such measures for a given group.
\end{problem}

 We will solve this problem for the infinite symmetric group.
 Of course, the theory we develop here is interesting for countable
 groups that have uncountably many subgroups.

 It is known (see \cite{LB}) that a non-Abelean free group has a lot
 of such measures, but one has no general description of these measures.
 In \cite{Grigor}, actions of groups on trees and more general graphs
 were considered, and it was verified that these
 actions are TNF.

 A natural point of view on $\operatorname{Ad}(G)$-invariant measures is
 to regard them as {\it random subgroups of $G$};
 more precisely, each $\operatorname{Ad}(G)$-invariant measure
 determines a statistics on the set of subgroups, or a random subgroup.
 The invariance under conjugations is a  natural condition for
 applications. One may refine this condition and consider
 random subgroups with additional properties (e.g., TNF measures, or
 $\operatorname{Ad}(G)$-invariant measures on the set of self-normalizing subgroups, see below).
 In the recent paper \cite{IRS}, a problem related to random
 subgroups arises for a different reason.

  The lattice structure on the space of subgroups $L(G)$ is a
  very popular object of algebraic studies (see, e.g., \cite{Sch});
  we will not use it. It is worth mentioning that an important and
  completely open question concerns the existence of {\it $\sigma$-finite invariant continuous  measures on $L(G)$}.
  As far as we know, ergodic aspects of the natural dynamical system
  $(L(G),\operatorname{Ad}(G),\nu)$, where $\nu$ is an
  $\operatorname{Ad}(G)$-invariant measure,
 has not been seriously studied.

 Let us now connect these dynamical systems
 $(L(G),\operatorname{Ad}(G),\nu)$ with nonfree actions of the group $G$.
Namely, we can identify the quotient space with respect to the iso-stable
partition $\xi_G$ with $L(G)$.

\begin{definition}
Given an action of a group $G$ on a Lebesgue space $(X,\mu)$, consider the map
$$ \Psi:X \rightarrow L(G),\quad  \Psi(x) =  G_x.$$
It is a measurable homomorphism of the triple $(X,G,\mu)$
to the triple $(L(G),\operatorname{Ad}(G),\Psi_*\mu)$,
where $\Psi_*\mu$ is an $\operatorname{Ad}(G)$-invariant Borel measure
on $L(G)$, the image of the measure $\mu$ under $\Psi$:
$$\Psi_*(\mu)(B)=\mu\{x:G_x \in B\subset L(G)\}.$$
We will call $\Psi_*\mu$ the characteristic measure of the action $(X, G,\mu)$.
\end{definition}

From definitions it is clear that $\Psi$ is {\it isomorphism} between the reduced actions of the group $G$
on $(X/\xi_G,\mu_{\xi_G})$  and adjoint action on $(L(G),\Psi_*\mu$.
\begin{statement}
The characteristic measure  $\Psi_*\mu$ is a metric invariant of
measure-preserving actions in the following sense:
if two measure-preserving actions of a countable group $G$ on spaces
$(X^i,\mu^i)$, $i=1,2$, are metrically isomorphic, then the
corresponding measures  $\Psi_*\mu^i$,  $i=1,2$, on $L(G)$ coincide.
\end{statement}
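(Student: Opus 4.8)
The plan is to observe that a metric isomorphism of the two actions carries the stabilizer map $\Psi^1$ onto $\Psi^2$, and therefore carries $\Psi^1_*\mu^1$ onto $\Psi^2_*\mu^2$. Since $G$ is countable, I would first realize each action as a pointwise action (as recalled above for countable groups), so that the stabilizers $G_x$ and the maps $\Psi^i(x)=G_x$ are defined on conull subsets of $X^i$; by the Definition preceding the statement each $\Psi^i$ is measurable, so that $\Psi^i_*\mu^i$ is a well-defined Borel measure on $L(G)$.

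Let $T\colon X^1\to X^2$ be a metric isomorphism, i.e.\ a measure-preserving bijection $\bmod 0$ with $T(gx)=gT(x)$. For each individual $g\in G$ this identity, together with injectivity of $T$, holds off a $\mu^1$-null set; intersecting these countably many exceptional null sets (and the conull sets on which the two actions are pointwise), and then passing to the $G$-invariant subset $\bigcap_{g\in G}g^{-1}(\cdot)$, which is still conull because $T$ and the $G$-action preserve $\mu^1$, I obtain a conull $G$-invariant $X_0\subset X^1$ on which $T(gx)=gT(x)$ holds for all $g\in G$ simultaneously and $T$ is injective. On $X_0$ one then has, for every $g$, the equivalence $gx=x\iff gT(x)=T(x)$, i.e.\ $G_x=G_{T(x)}$, which is precisely
$$\Psi^1(x)=\Psi^2(T(x))\qquad (x\in X_0).$$
Hence $(\Psi^1)^{-1}(B)=T^{-1}\bigl((\Psi^2)^{-1}(B)\bigr)\bmod 0$ for every Borel $B\subset L(G)$, and since $T$ is measure preserving,
$$(\Psi^1_*\mu^1)(B)=\mu^1\bigl((\Psi^1)^{-1}(B)\bigr)=\mu^1\bigl(T^{-1}((\Psi^2)^{-1}(B))\bigr)=\mu^2\bigl((\Psi^2)^{-1}(B)\bigr)=(\Psi^2_*\mu^2)(B),$$
so $\Psi^1_*\mu^1=\Psi^2_*\mu^2$.

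Conceptually the assertion is immediate; the one genuine point — and the only place where countability of $G$ is used — is the passage from ``equivariance holds for each $g$ outside a null set'' to ``equivariance holds for all $g$ on a single conull set,'' which is exactly what makes the pointwise stabilizer $G_x$ (hence $\Psi^i$) well defined and allows the two maps to be compared pointwise. I expect this mod\,$0$ bookkeeping to be the only (minor) obstacle. Equivalently, one can phrase the whole argument in terms of partitions: a metric isomorphism carries the iso-stable partition $\xi_G$ of $(X^1,\mu^1)$ onto that of $(X^2,\mu^2)$, hence carries the induced measure on the quotient $X^1/\xi_G\hookrightarrow L(G)$ onto the one on $X^2/\xi_G\hookrightarrow L(G)$, and these induced measures are by construction $\Psi^1_*\mu^1$ and $\Psi^2_*\mu^2$.
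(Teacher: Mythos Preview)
Your proof is correct and follows essentially the same idea as the paper: an isomorphism of actions sends the set of points with a given stabilizer to the same set, hence the pushforward measures on $L(G)$ coincide. You have simply spelled out the $\bmod\,0$ bookkeeping (using countability of $G$) that the paper leaves implicit in its one-sentence argument.
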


\begin{proof}
It suffices to observe that every isomorphism between two actions of $G$
must send the set of points with a given stabilizer
for one action to the same set for the other action.
\end{proof}

 The map $\Psi$ is nothing else than the factorization of the space
 $(X,\mu)$ with respect to the iso-stable partition $\xi_G$, which
 identifies the quotient space $X/{\xi_G}$ with the image $L(G)$.
 The quotient measure $\mu_{\xi_G}$ tautologically coincides with the characteristic measure $\Psi_*\mu$.

For a free action, $\Psi$ is a constant map and the characteristic
measure is the $\delta$-measure at
 the identity subgroup $\{e\}\in L(G)$. If the action of the group is effective then $\bigcap_x G_x=\{e\}$.

\subsection{TOTALLY NONFREE (TNF) ACTIONS}

\begin{definition-theorem}
A measure-preserving action of a countable group $G$ on a
space $(X, \mu)$ is called totally nonfree (TNF) if one of the
following equivalent conditions holds:

{\rm 1.} The $\sigma$-field ${\frak A}_G$ $(={\frak A}^G={\frak
A}(G))$
generated by all sets of fixed points
coincides with the whole $\sigma$-field $\frak
A$ of all measurable subsets of $X$. Equivalently, the iso-stable partition $\xi_G$ coincides
$\bmod 0$ with the partition into separate points.

{\rm 2.} The map $\Psi:X \rightarrow X/{\xi_G}\simeq L(G)$
is a  $\bmod0$ isomorphism $\bmod 0$ of the action of $G$ on $(X,\mu)$ and adjoint action on $(L(G),\Psi_*\mu$.

If an action is TNF, then we say that its characteristic
measure is a TNF measure on $L(G)$.
\end{definition-theorem}

The equivalence of the above two conditions directly follows from the
definitions of the previous section.
 It is also clear that the definitions are correct with respect to
changing the actions on sets of zero measure.

TNF actions are an opposite extreme to free actions.

{\it The characteristic measure of the ergodic TNF-action is complete metric invariant}
therefore the metric classification of TNF actions of a countable group $G$
reduces to the calculation of the characteristic measures $\Psi_*\mu$ on the lattice $L(G)$.
Thus the classification problem for TNF actions is, in a sense,  smooth (tame),
in contrast to the general isomorphism problem in ergodic theory.

\begin{definition}
 The {\it normalizer} of a subgroup $\Lambda \subset G$ is the subgroup
$N(\Lambda)=\{g\in G: g\Lambda g^{-1}=\Lambda$\}.
A subgroup $H \subset G$ for which $N(H)=H$ is called {\it
self-normalizing}.\footnote{It is more natural to call such a subgroup
{\it abnormal}, or {\it anormal}.}
Denote the set of all self-normalizing subgroups of $G$ by $LN(G)$.
\end{definition}

The following claim is obvious.

\begin{statement}
A transitive action of a group $G$ (the left action of $G$ on a
homogeneous space $G/H$) is totally nonfree
if and only if
the stabilizer (i.e., $H$) is
a self-normalizing subgroup ($N(H)=H$, or $H\in LN(G)$).
\end{statement}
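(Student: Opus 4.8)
The plan is to reduce everything to a one-line computation of stabilizers in the homogeneous space and then read off the condition from part~2 (equivalently part~3) of the Definition--Theorem. First I would record that for the left action of $G$ on $G/H$ the stabilizer of a coset $gH$ is the conjugate
$G_{gH}=gHg^{-1}$: indeed $g'(gH)=gH$ means $g^{-1}g'g\in H$, i.e. $g'\in gHg^{-1}$. Hence the characteristic map of this action is $\Psi\colon gH\mapsto gHg^{-1}\in L(G)$, and the action is TNF exactly when $\Psi$ separates the points of $G/H$, that is, when $\Psi$ is injective.

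Next I would compute precisely when two cosets carry the same stabilizer. We have $g_1Hg_1^{-1}=g_2Hg_2^{-1}$ iff $(g_2^{-1}g_1)H(g_2^{-1}g_1)^{-1}=H$, i.e. iff $g_2^{-1}g_1\in N(H)$, which is the same as $g_1N(H)=g_2N(H)$. So $\Psi$ glues two distinct cosets $g_1H\ne g_2H$ exactly when $g_1,g_2$ lie in one left coset of $N(H)$ but in different left cosets of $H$. Since $H\subseteq N(H)$ always holds, the partition of $G$ into left cosets of $H$ refines the partition into left cosets of $N(H)$; therefore $\Psi$ is injective on $G/H$ if and only if these two partitions coincide, and that happens if and only if $H=N(H)$, i.e. $H\in LN(G)$. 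This yields both implications simultaneously, and essentially no step here is a real obstacle: the content is just the identity $G_{gH}=gHg^{-1}$ plus the bookkeeping of $H$-cosets inside $N(H)$-cosets.

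The only point deserving a word is the measure-theoretic reading of ``totally nonfree'', since a transitive action on an infinite $G/H$ carries no invariant probability measure; the statement should be understood through condition~2 with ``almost all pairs'' read as ``all pairs'', the natural convention for a single orbit. I would make the dichotomy sharp by a homogeneity remark: if some pair $g_1H\ne g_2H$ had $G_{g_1H}=G_{g_2H}$, then for every $g\in G$ one would get $G_{gg_1H}=gG_{g_1H}g^{-1}=gG_{g_2H}g^{-1}=G_{gg_2H}$ with $gg_1H\ne gg_2H$, so $\Psi$ would identify a distinct partner with every point. Thus $\Psi$ is either globally injective or ``nowhere'' injective; in the finite case, where the normalized counting measure is available, the latter is literally the failure of condition~2 on a set of full $\mu\times\mu$-measure, and in the infinite case it is its evident combinatorial analogue. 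So the claimed equivalence is robust regardless of how one sets up the measure, and the proof is the elementary computation above.
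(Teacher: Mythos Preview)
Your proof is correct and follows essentially the same approach as the paper: both arguments reduce to the observation that for a transitive action the stabilizer of $gH$ (or $hx$) is $gHg^{-1}$, so two points share a stabilizer precisely when the element relating them lies in $N(H)$, whence injectivity of $\Psi$ is equivalent to $N(H)=H$. Your added paragraph on the measure-theoretic reading of TNF in the transitive case is a useful clarification that the paper leaves implicit.
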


Indeed, any two points belong to the same orbit (for any $x,y\in X$,
there exists $h\in G$ such that $y=hx$);
 if they have the same
stabilizer $G_x$, then $h\in N(G_x)$, where
$N(G_x)=\{h\in G: h^{-1}G_xh=G_x\}$ is the normalizer of $G_x$. Consequently,
either $N(G_x)\ne G_x$ and the action is not TNF, or $N(G_x)= G_x$
and the action is TNF.

A similar situation holds for general actions.

\begin{statement}
{\rm1.} If a  measure-preserving action of a group $G$ on a space $(X,\mu)$ is
TNF, then for almost all $x \in X$ the stabilizers $G_x$
are self-normalizing: $N(G_x)=G_x;\quad \mbox{or}\quad \mu\{x:G_x \in LN(G)\}=1$. In other words, the characteristic
measure of the TNF-action is concentrated on the set $LN(G)$ of self-normalizing subgroups.

{\rm2.} The adjoint action of the group $G$ on the measure space
$(L(G),\nu)$ is TNF if and only if $\nu$-almost all subgroups $H\in L(G)$ have self-normalizing
normalizator: $N(N(H))\equiv N^2(H)=N(H)$. In particular, the adjoint action of the group on the lattice $(L(G),\nu)$ with
an $\operatorname{Ad}(G)$-invariant TNF-measure $\nu$ is a TNF action.
\end{statement}

\begin{proof}
1. Assume that the action is TNF, but at the same time there exists a measurable set $A$ of positive $\mu$-measure such that
the stabilizer $G_x$ of every point  $x\in A$ is not self-normalizing.
Then there exists a point $hx\in A$ with $h\in G$, $h \notin G_x$ such
that $hx\neq x$ but $hG_xh^{-1}=G_x$; consequently, $x$ and $hx$ have
the same stabilizer, which contradicts the TNF property.

2. For adjoint action of the group $G$ on $L(G)$ the stabilizer $G_H=N(H)$, so condition $N(G_H)=G_H$ is equivalent to the condition
 $N(H)=N^2(H)$ for $\nu$-almost all $H$; by the item 1 we have TNF-action.
 \end{proof}
\begin{remark}
1.As we saw the adjoint action on $L(G),\nu)$ could be TNF-action not only for TNF-measures but for $\nu$ with property $\nu\{H:N^2(H)=H\}=1$.
{\it  We will call this measures RTNF-measures}. In other words - TNF adjoint action takes place for measures $\nu$ which are concentrated
on $LN^2(G): \nu(LN^2(G))=1$. We will see that for infinite symmetric group we have the examples of those measures.

2.The condition $\mu\{x:G_x\in LN(G)\}=1$ is only
necessary but not sufficient for the action to be TNF,
because the stabilizers of two points $x,y$ that belong to different
orbits can be the same self-normalizing subgroup: $G_x=G_y \in LN(G)$.
\end{remark}

Now we can specify Problem 1 formulated above and reduce the
description of TNF actions of a group to the following question.

\begin{problem}
Given a group $G$, describe all ergodic continuous (nonatomic) probability TNF (correspondingly RTNF) measures on the lattice $L(G)$ of its subgroups.
Or, equivalently, describe all ergodic continuous (nonatomic) Borel probability
$\operatorname{Ad}(G)$-invariant measures on the subset $LN(G)$ (correspondingly on $LN^2(G)$) of whole space $L(G)$.
\end{problem}

We will see the different answer on the Problem 1 and Problem 2 for infinite symmetric group.

\medskip

Remark that for ergodic TNF-measure $\nu$ the zero-one law with respect to $LN(G)$ takes place:
either $\nu(LN(G))=0 \quad \mbox{or} \quad \nu(LN(G))=1$.
It is interesting to characterize the TNF actions
of a given group $G$ from the point of view of ergodic theory
and that of representation theory: what kind of
ergodic properties can have TNF actions,
and what kind of factor representations can arise for TNF actions? etc.

It is interesting also to describe other classes of  $\operatorname{Ad}(G)$-invariant measures
depending on the property of subgroup of full measure (or to describe random subgroup of the various algebraic types).

\subsection{THE CANONICAL SKEW PRODUCT, AND REDUCED (RTNF) ACTION.}

Now we consider the general actions and describe the canonical reduction which leads to a TNF actions.
First of all, we consider the factorization with respect to the iso-stable partition $\xi_G$
in order to define the \textbf{the first canonical skew product structure for general actions of groups}.

Consider a measure-preserving action of a countable group $G$ on a
Le\-besgue space $(X,\mu)$ and the $G$-invariant iso-stable
 partition $\xi_G$ (see Section~{\rm1.1}). The reduced action (quotient action) of $G$ on
 the space $(X/{\xi_G},\mu_{\xi_G})$ (see Definition~3)
 is isomorphic to the adjoint action of $G$ on the space $L(G)$ equipped with the
 characteristic measure $\Psi_*\mu$.
 We regard the triple $(L(G),\operatorname{Ad}(G),\Psi_*\mu)$  as
 the base of the canonical skew product structure for the action
 $(X,G,\mu)$.

Recall that a skew product
is defined if we have an action of the group on the base and a 1-cocycle
on the base with values in the group of automorphisms of the typical
fiber $ (Y,\alpha) $. For a free action of $G$, this skew product is
trivial, the base consists of a single point,
and the fiber is $(Y,\alpha) \equiv (X,\mu)$.  In the case of a TNF
action, $Y$ is a one-point space
and the base coincides with the whole space $(X,\mu)\equiv
(L(G),\Psi_*\mu)$.

\begin{definition}
 The action of the group $G$ on the base  $(X/{\xi_G},\mu_{\xi_G})\simeq  (L(G),\Psi_*\mu,\operatorname{Ad}(G),)$ we call canonical reduced action.
  \end{definition}
   The action on the space $(X,\mu, G)$ becomes a skew-product with fiber $(Y,\alpha)$, and we have the 1-cocycle $c$
   which is a measurable function on the product of the group and base with values in the group of measure-preserving transformations of the fiber $(Y,\alpha)$: $c:G\times L(G) \rightarrow Aut Y$.
Recall that the general condition on a cocycle $c$ on the space $L(G)$ with the
adjoint action of the group $G$ and an arbitrary group of
coefficients has the form
$$c(g_1g_2,H)=c(g_1,H)c(g_2, g_1H{g_1}^{-1}), \quad c(e,H)={\rm Id},$$
where $g\in G$, $H\in L(G)$, and $c(\cdot,\cdot)$ is a measurable function on $G\times L(G)$ with values in some group.
From this we can conclude that for a fixed $H$, the restriction of the map $g\mapsto c(g,H)$ to $g\in H$
is a homomorphism of the group $H$.
But our cocycle that defines the canonical skew product has a
stronger property.

\begin{statement}
  If the action of $G$ on $(X,\mu)$ is ergodic, then the above
  construction defines a decomposition of the space $(X,\mu)$
into the direct product $(X \approx L(G)\times Y; \mu_{\xi_G}\times \alpha) $,
where $(Y,\alpha)$ is the typical fiber of the skew product; the
action of $G$ on the base is the adjoint action with the
$\operatorname{Ad}(G)$-invariant measure
$\Psi_*(\mu)$; a 1-cocycle $c(\cdot,\cdot)$
is a function on the space $G \times L(G)$ with values in the group
$\operatorname{Aut}(Y,\nu)$
of measure-preserving automorphisms of the space $(Y,\nu)$.

A necessary and sufficient condition for a cocycle $c$ to define
the canonical skew product is as follows.

{\rm1.} If $\nu$ is a TNF measure, which means that action on the base is TNF-action, then
$$c(h,H)={\rm Id}$$
for $\nu$-almost all $H\in L(G)$ and $h \in H$,
where ${\rm Id}$ is the $\bmod 0$ identity map on the space $Y$;
in other words, the homomorphism mentioned above is identical.

{\rm2.} If $\nu$ is not a TNF measure, then, in addition to the previous condition, the following
property holds:
for $\nu$-almost all $H\in L(G)$ and $h\in N(H)\setminus H$,
$$\alpha (\operatorname{Fix}_{c(h,H)})=0,$$
where $\operatorname{Fix}_\phi=\{y\in Y:\phi(y)=y\}$ is the set of
fixed points of an automorphism $\phi$.
\end{statement}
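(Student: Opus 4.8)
The plan is to establish the direct-product presentation of an ergodic action over its iso-stable base first, and then to read the two conditions off that presentation. For the first part I would use Rokhlin's theory of measurable partitions. Since $\xi_G$ is measurable (Section~1.1), the space $(X,\mu)$ carries a canonical system of conditional measures $\{\mu_H\}_{H\in L(G)}$ on the fibres $\Psi^{-1}(H)$, depending measurably on $H$. As $\xi_G$ and $\mu$ are $G$-invariant, each $g\in G$ maps $(\Psi^{-1}(H),\mu_H)$ isomorphically onto $(\Psi^{-1}(gHg^{-1}),\mu_{gHg^{-1}})$, so the Lebesgue-space isomorphism type of $(\Psi^{-1}(H),\mu_H)$ is an $\operatorname{Ad}(G)$-invariant quantity; as it is also a measurable function of $H$ (being determined by the mass of the continuous part and the decreasing sequence of atom weights of $\mu_H$), it is constant for $\Psi_*\mu$-almost every $H$ by ergodicity of the adjoint action on the factor $L(G)=X/\xi_G$. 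Denote the common type by $(Y,\alpha)$. Rokhlin's theorem then supplies a measurable family of isomorphisms $\phi_H\colon(Y,\alpha)\to(\Psi^{-1}(H),\mu_H)$ and a $\bmod 0$ isomorphism $X\cong L(G)\times Y$ taking $\mu$ to $\Psi_*\mu\times\alpha$ and $\xi_G$ to the partition into the sets $\{H\}\times Y$. Transporting the $G$-action through this isomorphism writes it as $g\cdot(H,y)=(gHg^{-1},c(g,H)y)$, where $c(g,H)\in\operatorname{Aut}(Y,\alpha)$ is the automorphism induced by $g$ between the fibres over $H$ and $gHg^{-1}$ read through $\phi_H$ and $\phi_{gHg^{-1}}$; the cocycle identity displayed above, and in particular the homomorphism property of $g\mapsto c(g,H)$ on $H$, follow by direct computation.

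For the second part the key point is the stabilizer of a point of the resulting skew product: $g$ fixes $(H,y)$ precisely when $gHg^{-1}=H$ and $c(g,H)y=y$, that is,
$$G_{(H,y)}=\{\,g\in N(H):c(g,H)y=y\,\}.$$
Now $c$ defines the canonical skew product over $(L(G),\operatorname{Ad}(G),\nu)$ exactly when the canonical skew product of the action $(L(G)\times Y,G,\nu\times\alpha)$ reproduces $(L(G),\operatorname{Ad}(G),\nu)$; since that canonical skew product is built over the iso-stable quotient with the tautological map $(H,y)\mapsto G_{(H,y)}$ to $L(G)$, this happens iff $G_{(H,y)}=H$ for $(\nu\times\alpha)$-almost all $(H,y)$. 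I would split this equality into two inclusions. The inclusion $H\subseteq G_{(H,y)}$ a.e. means that for a.e. $H$ and every $h\in H$ the automorphism $c(h,H)$ fixes $\alpha$-almost every point of $Y$, i.e. $c(h,H)=\operatorname{Id}$ in $\operatorname{Aut}(Y,\alpha)$ — condition~1. The inclusion $G_{(H,y)}\subseteq H$ a.e. means that for a.e. $H$ and every $g\in N(H)\setminus H$ one has $c(g,H)y\ne y$ for $\alpha$-a.e. $y$, i.e. $\alpha(\operatorname{Fix}_{c(g,H)})=0$ — condition~2; here countability of $G$, hence of $N(H)$, lets one absorb the quantifier over $g$ into a countable union of $\alpha$-null sets. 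When $\nu$ is TNF one has $N(H)=H$ for $\nu$-almost every $H$ (by the Proposition on self-normalizing stabilizers), so $N(H)\setminus H$ is empty a.e. and condition~2 is vacuous, leaving only condition~1; when $\nu$ is not TNF both are needed. Conversely, if condition~1 (and condition~2 when $\nu$ is not TNF) holds, the stabilizer formula gives $G_{(H,y)}=H$ a.e., so the tautological map is $\bmod 0$ the projection onto $L(G)$ and pushes $\nu\times\alpha$ to $\nu$; thus the skew product is canonical.

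The step I expect to be the main obstacle is the first one, specifically the passage from the measurable bundle of conditional measures to a genuine direct product with a measurably chosen trivialization $\{\phi_H\}$, and hence the measurability of the cocycle $c$. This is exactly where ergodicity is used, through the almost-sure constancy of the fibre isomorphism type. Once the presentation $X\cong L(G)\times Y$ with its cocycle is available, the remainder — the stabilizer formula, the two-inclusion analysis, and the collapse of condition~2 in the TNF case — is routine.
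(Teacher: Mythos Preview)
Your proposal is correct and follows essentially the same approach as the paper. The paper's own justification is only a one-line sketch---that ``the conditions on a cocycle follow from the fact that, by definition, the fibers of the partition $\xi$ coincide with the sets of points with a given stabilizer''---and explicitly declines to give details; your stabilizer formula $G_{(H,y)}=\{g\in N(H):c(g,H)y=y\}$ and the two-inclusion analysis are precisely the unpacking of that sentence, while your use of Rokhlin's theory and ergodicity to obtain the product decomposition supplies the structural part that the paper leaves entirely implicit.
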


The first condition means that  for $\nu$-almost all subgroups
$H$, for all $h\in H$, almost all points of $Y$ are fixed points
of the automorphism $c(h,H)$;
and the second condition means that for $\nu$-almost all subgroups $H$,
for all $h\notin H$,
the set of fixed points of $c(h,H)$
is, on the contrary, of zero measure.
These conditions on a cocycle follow
from that fact that, by definition, the fibers of the partition $\xi$
coincide with the sets of points with a given
stabilizer. We will not discuss details and similar questions.
Unfortunately, it is difficult to verify whether there exists a
cocycle satisfying this condition for a given $\operatorname{Ad(G)}$-invariant measure $\nu$.
At the same time, for the analysis of a given action it is important
to use the canonical skew product and to study the corresponding
$\operatorname{Ad}(G)$-invariant measure
on $L(G)$ and cocycle.

Now let us consider the action of the group $G$ on the base of the canonical skew-product.
We assume that this action is effective, in opposite case we must apply al arguments to the
quotient group of group $G$ over kernal.
\begin{definition-theorem}
Let $\pi:X\rightarrow (X/{\xi_G})$ -a canonical projection over iso-stable partition;
the stabilizer of the projection of the point with respect to reduced action is normalizer of
the  stabilizer: $$ Stab_{\pi(x)}=N(Stab_x).$$
The action of a group $G$ on a space $(X,\mu)$ is called {\it reduced
totally nonfree (RTNF)} if the reduced action is a TNF action, or,
equivalently, (see section above) $\mu\{x:N(G_x)\in LN(G)\}=1  \Leftrightarrow  \mu\{x:N^2(G_x)=N(G_x)\}=1$.
\end{definition-theorem}

The adjoint action of the element of $g\in Stab_{\pi(x)}$ must preserve the stabilizer of $Stab_x$
by construction, this means that $g \in N(Stab_x)$ and  and and vice versa.
{\it A RTNF-measure on $L(G)$ is, by definition, the characteristic measure
$\nu$ of a RTNF action} and has property: $\nu\{H\in L(G):N^2(H)=N(H)\}=1$
(the second normalizer of a subgroup coincides with its first
normalizer).

The following commutative diagram shows the first step of our classification:
$$
\begin{CD}
X @>\Psi>> L(G)\\
@V\pi VV @VV N V\\
X/\xi_G @>\Psi>> L(G).
\end{CD}
$$
Here the map $\Psi:y\mapsto G_y$ associates with a point $y$
       its stabilizer, $\pi: X \rightarrow X/\xi_G$ is the canonical projection,
and  the map     $ N:H\mapsto N(H)$ associates with a subgroup its normalizer.

In general, the quotient of the canonical skew product is not a TNF action,
because the stabilizer of a point of the quotient space is the
normalizer of the stabilizer of the original point,
$\operatorname{Stab}_{\Psi(x)}=N(\operatorname{Stab}_x)$,
but different stabilizers may have the same normalizers, and,
consequently, we can obtain different points with the same stabilizers.
Thus we can apply again the map $\Psi$ to the base $(X/{\xi_G},\nu_{\xi})$ and consider the second
canonical skew product of the base, the third one, etc. This gives an hierarchy of
$\operatorname{Ad}$-invariant measures on $L(G)$.

\begin{definition}
We will denote by $AD(G)$ the space of all
$\operatorname{Ad}(G)$-invariant measures on the lattice $L(G)$ (which will be called in short ``AD-measures'').
Denote by ${\cal N}=N_*$ the operation on the set of measures on
$L(G)$ corresponding to the {\it normalization of subgroups}:
$[{(\cal N)}\nu](F)=\nu(N^{-1}F)$, $F\subset L(G)$.
It is clear that ${\cal N}$ sends AD-measures to AD-measures.
\end{definition}

It follows from our definitions that if a measure $\nu$ on $L(G)$ is
RTNF, then the measure ${\cal N}(\nu)$ is TNF; in particular,
if $\nu$ is a TNF measure, then  ${\cal N}(\nu)=\nu$. Thus the
operation ${\cal N}:\{\mbox{RTNF measures}\}\rightarrow\{\mbox{TNF
measures}\}$ is a projection.

We have a hierarchy of AD-measures on the lattice $L(G)$:
$$AD\supset {\cal N}(AD) \supset {\cal N}^2(AD) \supset\dots\supset RTNF \supset TNF.$$

It is natural to assume that for some groups $G$, the chain of these
normalizations as well as the chain of the steps of reductions above can be infinite or even transfinite.
The most interesting classes of $AD$-measures TNF, RTNF, and AD itself.

Remark that for RTNF measure $\nu$ the measure ${\cal N}(\nu)$ is TNF measure, and, although $\nu$ is not TNF-measure,
the adjoint action of the group $G$ on the $(L(G), \nu)$ is TNF-action. Indeed, by definition of RTNF for
$\nu$-almost all subgroup $H$, $N^2(H)=N(H)$, but $N(H)$ is the stabilizer of $H$, so $\nu$-almost all stabilizers a self-normal.
Moreover, adjoint action of $G$ on the space $(L(G),\nu)$ for RTNF-measure $\nu$ is metrically isomorphic the adjoint
action of $G$ on the space  $(L(G),{\cal N}(\nu))$  and normalization $N:L(G)\rightarrow L(G)$ is that
isomorphism of the spaces and actions.

\subsection{REMARKS}
\medskip
\textbf{1. $AD$-measures}
The natural question -is it true that each ergodic $AD$-measures is characteristic measure for some
ergodic action of the group $G$. We formulate the necessary and sufficient condition on $AD$ measure
to be characteristic.
\begin{statement}
Suppose $\nu$ is ergodic $AD$-measure on $L(G)$; $\nu$ is characteristic measure for an ergodic action of $G$ on a space
$(X,\mu)$ iff there exist probability $AD$-measure $\bar \nu$ with properties:

1) adjoint action of $G$ on $(L(G),\bar\nu)$ is ergodic;

2) ${\cal N}(\bar \nu)=\nu$

 In this case we can define $X=L(G),\mu=\bar \nu$.
\end{statement}

It is not clear if such a measure $\bar \nu$ exists for all $AD$-measures $\nu$.

\medskip
\textbf{2. Fibre bundle over $L(G)$.}
Each subgroup $H$ is normal subgroup in its normalizer $N(H)$,
so we have a fibre bundle over $L(G)$ with a fibre over $H\in L(G)$---
the group $N(H)/H$. This bundle is invariant under the adjoint action of the group.
We will use it for the theory of characters of the group.

\medskip
\textbf{3. The TNF limit of the normalizations of AD-measures.}
It is natural to assume that for every measure $\nu$ from the class of
AD-measures on a given group $G$, the AD-measure $\cap \nu$ does exist,
which is the limit in $n$ of the sequence of successive normalizations:  $\nu \mapsto
{{\cal N}_*}^n(\nu)$,  $n=1,2,\ldots $. This limit (for some groups,
it may be transfinite) must be a TNF measure.

\medskip
\textbf{4. TNF actions for continuous groups.}
As we know, the $\sigma$-fields ${\mathfrak A}_G$ and ${\mathfrak
A}^G$ can be different. It is natural to define a TNF action
of a general group as an action for which the $\sigma$-field
${\mathfrak A}^G$ is the complete $\sigma$-field, or, for which
the stabilizers separate points.
In this case, we again have an isomorphism between a TNF action and the adjoint action on the lattice of subgroups.

\medskip
\textbf{5. The continuous version of combinatorial multi-transitivity.}
The continuous counterpart of the notion of transitivity (or
topological transitivity) of actions of discrete groups is that of ergodicity.
What is the analog of double transitivity? A common explanation is
that this is the ergodicity of the action
on the Cartesian square. But I believe that this parallel is too weak. The definition of double transitivity
in combinatorics can be formulated as the transitivity of the action
of the stabilizer of a point $x$ on the space $X\setminus x$.
Thus we suggest the notion of multiple transitivity, which is related
to our consideration as follows.

\begin{definition}
Assume that a countable group $G$ acts on a standard space $(X,\mu)$
with a $G$-invariant continuous measure.
We say that the action is metrically double transitive if for $\mu$-almost every
point $x\in X$, the action of the stabilizer $G_x\subset G$
on $(X,\mu)$ is ergodic. We say that the action is metrically
$k$-transitive if for almost every (in the sense of the measure $\mu^k$
on $X^k$) choice of points $x_1,x_2,\dots, x_{k-1}$, the action of the intersection of
subgroups $\bigcap_{i=1}^{k-1} G_{x_i}$
on $(X,\mu)$ is ergodic.
\end{definition}

It is natural to consider this definition only for TNF actions of $G$.
It will be clear that all TNF actions of the infinite symmetric group are $k$-transitive for an arbitrary positive integer $k$.

\medskip
\emph{It is of interest to find all countable groups for which TNF
$k$-transitive actions  exist for any positive integer $k$}.
\medskip

This problem is perhaps related to the class of {\it oligomorphic
groups}, which was defined by P.~Cameron \cite{Cameron}
(a subgroup $G$ of the group $S_{\Bbb N}$ of all finite permutations
of $\Bbb N$ is called oligomorphic if for any positive integer $k$,
the number of orbits of the diagonal action of $G$ in the Cartesian product ${\Bbb N}^k$ is finite).

\section{THE LIST OF RANDOM SUBGROUPS OF THE INFINITE SYMMETRIC GROUP}

\subsection{SIGNED PARTITIONS AND SIGNED YOUNG SUBGROUPS OF SYMMETRIC GROUPS}

We consider the countable group $S_{\Bbb N}$, the infinite symmetric
group of all finite permutations of the set of positive integers
$\Bbb N$ (or an arbitrary countable set). In this section, we will
give the list of all AD-measures on the lattice $L(S_{\Bbb N})$ of subgroups
of this group and, in particular, the list of TNF measures.
We will use some classical facts about permutation groups and the probabilistic approach.

The lattice $L(S_{\Bbb N})$ is very large and contains very different
types of subgroups. Nevertheless,
the support of an AD-measure consists of subgroups of a very
special kind: so-called signed Young groups.
The topology and the Borel structure on $L(S_{\Bbb N})$ are
defined as usual; this is a compact (Cantor) space.

\begin{definition}[Signed partitions]
 A signed partition $\eta$ of the set $\Bbb N$
 is a finite or countable partition $\Bbb N=\cup_{B\in{\cal B}}B$ of
 $\Bbb N$ together with a decomposition ${\cal
 B}={\cal B}^+\cup{\cal B}^-\cup{\cal B}^0$ of
 the set of its blocks, where ${\cal B}^0$ is the set of all
 single-point blocks; elements of ${\cal B}^+$ are called positive
 blocks, and elements of ${\cal B}^-$ are called negative blocks (thus
 each positive or negative block contains at least two points), and we
 denote by $B_0$ the union of all single-point blocks:
 $B_0=\cup_{\{x\}\in{\cal B}^0}\{x\}$.

 Denote the set of all signed partitions of $\Bbb N$ by ${\rm SPart}(\Bbb N)$.
\end{definition}

Recall that in the theory of finite symmetric groups, the Young
subgroup $Y_\eta$ corresponding to an ordinary partition
$\eta=\{B_1,B_2,\dots, B_k\}$
is $\prod_{i=1}^k S_{B_i}$,
where $S_B$ is the symmetric group acting on $B$. We will define the more general
notion of a {\it signed Young subgroup}, which makes sense both for
finite and infinite symmetric groups.
We will use the following notation: $S^+(B)$ is the symmetric group
of all finite permutations of elements of a set $B\subset \Bbb
N$, and $S^-(B)$ is the alternating
group on $B$.\footnote{Traditionally, the alternating group is denoted by $A_n$;
V.~I.~Arnold was very enthusiastic about the idea to denote it by
$S^-_n$ in order not to confuse it with the Lie algebra $A_n$; I agree
with this idea.}

\begin{definition}[Signed Young subgroups]
The signed Young subgroup $Y_{\eta}$ corresponding to a signed
partition $\eta$ of $\Bbb N$ is
$$Y_{\eta}=\prod_{B\in{\cal B}^+}S^+(B)\times \prod_{B\in{\cal B}^-}S^-(B).$$
\end{definition}

Note that  on the set $B_0\subset
\Bbb N$, the subgroup $Y_{\eta}$ act identically, so that the partition into the orbits of $Y_{\eta}$
coincides with $\eta$.

It is not difficult to describe the conjugacy class of Young subgroups
with respect to the group of inner automorphisms:
$Y_{\eta}\sim Y_{\eta'}$ if and only if $\eta$ and $\eta'$ are
equivalent up to the action of $S_{\Bbb N}$. But it is more important
to consider the conjugacy
with respect to the group of outer automorphisms. This is the group
$S^{\Bbb N}$
of all permutations of $\Bbb N$.
Denote by $r_0^\pm$ the number of infinite positive
(respectively, negative) blocks, and by $r^{\pm}_s$ the number of
finite positive
(respectively, negative) blocks of length $s>1$. Obviously, the list
of numbers $\{r^{\pm}_0,r^{\pm}_1, \dots \}$ is a complete set of
invariants of the group of outer automorphisms.

\subsection{STATEMENT OF THE MAIN RESULT}

Consider a sequence of positive numbers $\alpha=\{\alpha_i\}_{i\in
\Bbb Z}$ such that
$$\alpha_i\geq \alpha_{i+1}\geq 0\mbox{ for }i>0;
\quad \alpha_{i+1} \geq \alpha_i\geq 0\mbox{ for }i<0;\quad
\alpha_0\geq 0;\quad \sum_{i\in \Bbb Z} \alpha_i= 1.$$

Consider a sequence of $\Bbb Z$-valued independent random variables $\xi_n$, $n\in
\Bbb N$, with the distribution
$$\operatorname{Prob}\{\xi_n=v \}=\alpha_v \quad \mbox{for all} \quad
n\in {\Bbb N},\, v \in \Bbb Z.$$
Thus we have defined a Bernoulli measure  $\mu_{\alpha}$ on the space of
integer sequences $$\Bbb Z^{\Bbb N}=\{\xi=\{\xi_n\}_{n\in \Bbb N}:  \xi_n \in \Bbb Z\}.$$

\begin{definition}[A random signed Young subgroup and the measures
$\nu_{\alpha}$]
Fix a sequence $\alpha=\{\alpha_i, i\in \Bbb Z\}$, and corresponding
Bernoulli measure  $\mu_{\alpha}$; for each
realization of the random sequence $\{\xi_n\}$, $n\in \Bbb N$,
  with the distribution $\mu_\alpha$, define a random signed partition
  $\eta(\xi)$ of $\Bbb N$ as follows:
$$\eta(\xi)=\{B_i\subset {\Bbb N}, i\in \Bbb Z\},\qquad
B_i:=\{n \in {\Bbb N}:\xi_n=i\},$$
here ${\cal B}^+=\{B_i,i>0\}; {\cal B}^-=\{B_i,i<0\}$,
and $B_0$ is understood as the union of one-point blocks.
The correspondence $\xi\mapsto \eta(\xi)$ defines a probability
measure on the set $\operatorname{SPart}(\Bbb N)$ of signed partitions,
or random signed partition; the image of the Bernoulli measure
$\mu_{\alpha}$. The correspondence $\xi \mapsto Y_{\eta(\xi)}$ defines
a measure, which we denote by $\nu_{\alpha}$, on the set of signed
Young subgroups, i.e., a measure on the lattice
$L(S_{\Bbb N})$ of subgroups of  $S_{\Bbb N}$.
\end{definition}

Note that all nonempty blocks of the random signed partition $\eta({\xi})$
that consist of more than one point are infinite
with $\nu_{\alpha}$-probability one.

Now we describe the list of all AD and TNF measures for the group
$S_{\Bbb N}$.

\begin{theorem}
{\rm 1.Every measure $\nu_{\alpha}$ is a Borel ergodic AD-measure on the lattice $L(S_{\Bbb N})$;
every ergodic probability Borel AD-measure on this lattice
coincides with the measure $\nu_{\alpha}$ for some $\alpha$.

\rm 2. The measure $\nu_{\alpha}$ is RTNF-measure for all alpha, and is TNF-measure if and only if
$\alpha_i=0$ for all $i\le0$. So adjoint action of the group $S_{\Bbb N}$ on the lattice $L(S_{\Bbb N})$
with any AD-measure is TNF-action.}
\end{theorem}

\subsection{PROOFS}

\begin{proof}
   1. The easy part of the proof is to check that the measures
   $\nu_{\alpha}$ are indeed ergodic AD-measures on $L(S_{\Bbb N})$.
The invariance follows from the construction, because
$\mu_\alpha$, being  a Bernoulli measure, is invariant under all permutations of indices.
The symmetric, alternating, and identity subgroups of the symmetric
groups $S(B)$ are normal, so they are $\operatorname{Ad}(G)$-invariant.
 Consequently, the measure $\nu_{\alpha}$, being the image of
 $\mu_{\alpha}$, is $\operatorname{Ad}(G)$-invariant. The ergodicity with respect to
 permutations also follows from the ergodicity of the Bernoulli measure.

  2. Now suppose that we have an ergodic AD-measure on
  $L(S_{\Bbb N})$. We will filter out, step by step, classes of
  subgroups of $S_{\Bbb N}$ that cannot support any
  AD-measure, and will finally obtain the class of signed
  Young groups as the only possible class.
Then we will construct all AD-measures on this class.

a) A classical result of the theory of permutation groups asserts that the group $S_{\Bbb N}$ has no primitive subgroups
except the whole group  $S_{\Bbb N}$ and the alternating
group.\footnote{A primitive subgroup is a subgroup that has no nontrivial invariant partitions.}
This is a more or less direct corollary of the fundamental
estimates obtained
by C.~Jordan for finite symmetric groups, which were generalized by H.~Wielandt \cite{W}
(see, e.g., \cite[Chapter~8]{DM}). Namely, this is a corollary of
Jordan's theorem asserting that if a primitive subgroup of $S_n$ has
an element with support of size $k$,
then $n<\beta(k)$; a sharp bound on $\beta(k)$ is still
unknown.

b) Now consider an AD-measure $\nu$ on the set of imprimitive but transitive subgroups of $S_{\Bbb N}$.
Assume that such a subgroup $H\in L(S_{\Bbb N})$ has an invariant partition $\theta$.
For the action of $H$ to be transitive, all nontrivial minimal blocks
of $\theta$ must have the same length $l>1$, which must be finite
(because finite permutations cannot move one infinite block to
another one).\footnote{Note that the lengths of all blocks for a given imprimitive group
may be either bounded (so-called ``almost primitive groups'') or
unbounded (``totally imprimitive subgroups''), see \cite{DM}, but this
difference is not important for our purposes.} Denote by $\theta(H)$
the partition of $\Bbb N$ into the minimal blocks for
$H$. The map $H \rightarrow \theta(H)$ associates with $\nu$-almost
every imprimitive subgroup a partition into blocks of length $l(H)$;
because of the $\operatorname{Ad}(G)$-ergodicity of $\nu$, this length is the same for
$\nu$-almost all subgroups $H$. Thus the map $H \mapsto \theta(H)$
sends $\nu$ to a probability measure on the set of
partitions of $\Bbb N$ with countably many blocks of the same length
$l>1$, and this measure is invariant with respect to the action of
$S_{\Bbb N}$ on the space of such partitions. Let us show that there are no such finite measures.

\begin{lemma}
There are no probability measures on the space
$\operatorname{Part}(l)$ of all partitions of $\Bbb N$ into (countably
many) blocks of length $l>1$ that are invariant
with respect to the group $S_{\Bbb N}$.
\end{lemma}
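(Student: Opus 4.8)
The plan is to rule out such a measure by a short first-moment (double-counting) argument, exploiting the tension between the \emph{quadratic} number of ``collisions'' forced by invariance under finite permutations and the \emph{linear} number of collisions permitted by the constraint that every block has size exactly $l$. The one consequence of invariance that is needed is the following: if $\nu$ is an $S_{\Bbb N}$-invariant probability measure on $\operatorname{Part}(l)$, then for any two distinct $i,j\in\Bbb N$ the set $E_{ij}=\{\pi\in\operatorname{Part}(l):i\text{ and }j\text{ lie in the same block of }\pi\}$ (which is clopen in the natural Cantor topology, hence $\nu$-measurable) has a measure $\nu(E_{ij})$ independent of the pair: a finite permutation carrying $\{i,j\}$ onto $\{i',j'\}$ maps $E_{ij}$ bijectively onto $E_{i'j'}$ and preserves $\nu$. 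Write $p=\nu(E_{12})\ge0$ for this common value.

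The main step is the estimate on $p$. Fix $N$ and set $f_N(\pi)=\#\{\{i,j\}\subset\{1,\dots,N\}:i,j\text{ in the same block of }\pi\}$. Integrating and using the previous paragraph gives
\[
\int f_N\,d\nu=\sum_{\{i,j\}\subset\{1,\dots,N\}}\nu(E_{ij})=\binom N2\,p .
\]
On the other hand, for \emph{every} $\pi\in\operatorname{Part}(l)$ one has the deterministic bound $f_N(\pi)\le\binom l2\lceil N/l\rceil$: if a block of $\pi$ meets $\{1,\dots,N\}$ in $m$ points it contributes $\binom m2$ collisions, where $0\le m\le l$ and the values of $m$ over all blocks sum to $N$, and since $m\mapsto\binom m2$ is convex this sum is maximized by taking as many full blocks ($m=l$) as possible. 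Hence $\binom N2\,p\le\binom l2\lceil N/l\rceil=O(N)$ for all $N$, which forces $p=0$.

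The contradiction now follows. Since $p=0$, for the fixed point $1$ we have, by countable subadditivity, $\nu\{\pi:\text{some }j\ne1\text{ lies in the block of }1\}\le\sum_{j\ge2}\nu(E_{1j})=0$. But in every $\pi\in\operatorname{Part}(l)$ the block containing $1$ has size $l\ge2$, so it does contain some $j\ne1$; thus the event in question is all of $\operatorname{Part}(l)$ and has $\nu$-measure $1$. This contradiction shows that no $S_{\Bbb N}$-invariant probability measure on $\operatorname{Part}(l)$ exists.

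As for the main obstacle: there is essentially none of substance here, and I do not expect any genuine difficulty. The only points needing (routine) verification are the clopenness, hence measurability, of the sets $E_{ij}$ and the elementary convexity bound on $f_N$; both are immediate. One could instead package the same computation as a mass-transport principle, or invoke an exchangeability/de Finetti description of invariant random partitions, but the first-moment estimate above is the most direct route and is what I would write up.
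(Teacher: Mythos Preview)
Your proof is correct and follows essentially the same route as the paper's: both arguments show that the common probability $p=\nu(E_{ij})$ that two fixed indices share a block must vanish (your first-moment inequality $\binom{N}{2}p\le O(N)$ is precisely the expectation form of the paper's pointwise ergodic-average computation $\frac{1}{n!}\sum_{g\in S_n}a_{gi,gj}\to 0$), and both then obtain a contradiction from the fact that the block of $1$ has size $l\ge 2$. Your presentation is somewhat cleaner in that it treats all $l$ uniformly, whereas the paper works out $l=2$ via $0$--$1$ matrices and only indicates the extension to $l>2$ by replacing matrices with tensors.
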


\medskip\noindent{\bf Remark.} The space $\operatorname{Part}(l)$
equipped with the weak topology is locally compact but not compact;
its natural compactification consists of
all partitions whose blocks have length at most $l$.

\begin{proof}
Consider the case $l=2$, the same proof is true for an arbitrary $l$.
Each partition from $\operatorname{Part}(2)$ determines a
symmetric matrix (for $l>2$ -symmetric tensor) $\{a_{i,j}\}$, $a_{i,j}=a_{j,i}$, $a_{i,i}=0$,
$i,j=1,2, \dots$, with only one entry in each row and each column equal to $1$, all the
other entries being equal to $0$.
But because of the $S_{\Bbb N}$-invariance, we have a random symmetric matrix $\{a_{i,j}\}$,
unique element in each row which is equal to 1 must be uniformly distributed along its row.
 It is impossible for infinite matrix.
\end{proof}

c) We have proved that an AD-measure on the lattice
$L(S_{\Bbb N})$ takes
the value $0$ on the set of all transitive subgroups; so we have reduced the analysis to
intransitive subgroups. Fix such a generic intransitive subgroup $H\in
L(S_{\Bbb N})$ and consider the maximal partition $\eta(H)$ into its
transitive components. The action of the group $H$ on each such
component must be primitive, because imprimitive cases can be
discarded for the same reason as in the previous part of the proof.
For the same reason, it is obvious that all components of this
partition must be infinite. Consequently, the action of $H$ on each
component is either the action of the whole symmetric group, or that
of the alternating group (see part~a) of the proof), or that of the
identity group on the single-point blocks. We denote the blocks by
$B_i$, $i>0$, when the action of $H$ is the action of the symmetric
group of $B_i$, and by $B_i$, $i<0$, when the action of $H$ is the
action of the alternating group of $B_i$.
 The action of the identity group on all single-point blocks can be
 regarded as the identity action on the union of such blocks $B_0$.
 Thus we have a {\it signed partition} $\eta(H)$ such that the action
 of $H$ on each block $B_i$, $i>0$, is the action of $S^+(B)$, the action of $H$ on
 each block $B_i$, $i<0$, is the action of $S^-(B)$,
 and the action on $B_0$ is the identity action. This means that
 $H\subset Y_{\eta} =\prod_i S^{\pm}(B_i)$ and the restriction of the
 action of $H$ to $B_i$ is the action of $S^{\pm}(B)$.

 For each $i\ne 0$ denote the group
 $$K_i =\{g\in S^{\pm}(B_i):\exists {\bar g}\in H,\quad {\bar g}|_{B_j}=id, \quad \forall j\ne i, \quad {\bar g}|_{B_i}=g \} $$
 or a subgroup of the all elements in $H$ which acts as identity on all $B_j, j\ne i$.

 It is clear that $K_i$ is a normal subgroup  in $S^{\pm}(B_i)$ (because it is the kernal of homomorphism),
 so $K_i$ is either $S^{\pm}(B_i)$ or $K_i=\{id\}$, and $$\prod_{i\ne 0} K_i \subset H$$
 Thus we need to prove that $K_i=H|_{B_i}=S^{\pm}(B_i)$ for all $i\ne 0$ (and in particular $K_i\ne \{id\} \quad \mbox{if} \quad i\ne 0$).
 There are no problem with $i$ if $K_i=S^+(B_i)=H|_{B_i}$. We must consider two cases: the first case when $K_i=\{id\}$ but $H|_{B_i}=S^{\pm}(B_i)$ (in this case it does not matter $H|_{B_i}=S^+$ or $S^-$, so $i\ne 0$), and the second  case when $K_i= S^-(B_i)\ne H|_{B_i}=S^+(B_i)$.
 Let us consider the first case. Suppose for some $i\ne 0 \quad K_i=\{id\}$ but $H|_{B_i}=S^{\pm}(B_i)$. Then there exist at least one $j\ne i$
  for which  $$K_i =\{g\in S^{\pm}(B_i):\exists {\bar g}\in H,\quad {\bar g}|_{B_j}=id,  \quad {\bar g}|_{B_i}=g \},$$  indeed the intersection could be either $\{id\}$ or $S^{\pm}(B_i)$ for all $j$ and if the intersection in the definition of $K_i$ over all $j\ne i$ is $\{id\}= K_i$, then
  such $j$ exists. It means that for this $j$  and  for $h\ne id, h \in  H|_{B_i}$ there exists $h'\in H|_{B_j}$ and $g\in H$
such that $g|_{B_i}=h, g|_{B_j}=h'$. So we have a map from $H|_{B_i}$ to $H|_{B_j}$ which is homomorphism, and consequently
isomorphism which is simply bijection - $T$ - between $B_i$ and $B_j$. This bijection could be arbitrary because of invariance under conjugation
of the group. Thus the action on of the group $H$ on $B_i\cup B_j$ is as follow: if $n\in B_i$ and $Tn = m \in B_i$, then $gm=Tgn$.
or $gT=Tg$ on $B_i\cup B_j$. If we restrict the action of $H$ on  $B_i\cup B_j$ only, we obtain that the group $H$ acts periodically
(or "simultaneously") on $B_i$ and $B_j$.

\begin{lemma}
There are no AD-invariant measures which are concentrated on the intransitive subgroups $H \subset S_{\Bbb N}$ of the following type:
If ${\Bbb N}={\Bbb N'}\times K$, ($N'$ is infinite), then
$$H=S_{\Bbb N'}\times \{id_K\}\subset S_{\Bbb N} \quad  H=\{g: g=(g',id_K); g'\in S_{\Bbb N'}\},$$
or periodic action on ${\Bbb N'}\times K$.
 \end{lemma}
 \begin{proof}
 The random group $H$ of this type must define a $S_{\Bbb N}$-invariant random partition of $\Bbb N$ onto $|K|$ parts
 and $S_{\Bbb N'}$- invariant random bijections between all parts. The invariant random partitions do exist -see the next item but
 invariant bijection do not because the absence of probability measure on the group $S_{\Bbb N'}$.
  \end{proof}
So we don't need to consider the subgroups $H$ for which the first case takes place and consequently we already proved
that $K_i=S^{\pm}(B_i) \quad i\ne 0$ (we write $S^{\pm}$ when it is not important either $S^+$ or $S^-$).

Suppose now that for some $i$, $K_i\supset S^-(B_i)$, and $H|_{B_i} = S^+(B_i)$.
Again find $j\ne 0$ for which  $K_i =\{g\in S^+(B_i):\exists {\bar g}\in H,\quad {\bar g}|_{B_j}=id, \quad {\bar g}|_{B_i}=g \} $.
 Because of definition of $K_i$ it is clear that $H|_{B_i\cup B_j}\supset S^-(B_i)\times S^{\pm}(B_j)$, and the last subgroup
 has index in $H|_{B_i\cup B_j}$ at most two, but also we have $H|_{B_i}=S^+(B_i)$, so $H|_{B_i\cup B_j}=S^+(B_i)\times S^{\pm}(B_j)$.
 But this means that $K_i=H|_{B_i}=S^+(B_i)$.

So we prove that $H = \prod_i H|_{B_i}$ and each $H|_{B_i}=S^{\pm}(B_i)$ for $i\ne 0$, or in another words we have proved that
$H$ must be a signed Young subgroups: only signed Young subgroups can carry AD-invariant measures
on the lattice $L(S_{\Bbb N})$.

The measures $\nu{\alpha}$ which was defined above are concentrated on the signed Young subgroup by definition.

 d)Now we will prove that indeed this case is realized: the random subgroups in the infinite symmetric group
or AD-invariant ergodic measure on $L(S_{\Bbb N}$ is one of the measure $\nu_{\alpha}$ and indeed each measure $\nu_{\alpha}$
are AD-invariant ergodic measure on $L(S_{\Bbb N}$.

  We must identify the required measures with the ergodic limits
 with respect to conjugation of signed Young subgroups. Because of the
 correspondence between signed Young subgroups and signed partitions,
 this question is equivalent to the description of $S_{\Bbb
 N}$-invariant measures on the set of signed partitions. The last
 question is similar to the classical de Finetti problem concerning
 $S_{\Bbb N}$-invariant measures on the space of all functions on
 $\Bbb N$ (see \cite{V74}). The only small difference lies in the fact
 that, in contrast to the classical situation, here we have three
 types of blocks of signed partitions instead of one type in the ordinary de Finetti theorem.

 \begin{lemma}[An analog of classical de Finetti's theorem; Kingman's theorem \cite{Ki}]
 Consider the space $\operatorname{SPart}(\Bbb N)$ of signed
 partitions of $\Bbb N$; every ergodic $S_{\Bbb N}$-invariant measure
 on  $\operatorname{SPart}(\Bbb N)$ is determined
 by a sequence
 $\alpha=\{\alpha_i\}_{i\in \Bbb Z}$ such that $\alpha_i\geq \alpha_{i+1}\geq 0$
 for $i>0$, $\alpha_{i+1} \geq \alpha_i\geq 0$
for $i<0$, $\alpha_0\geq 0$, and $\sum_{i\in \Bbb Z} \alpha_i= 1$,
as described above.
 \end{lemma}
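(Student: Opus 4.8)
The plan is to reduce the classification to the classical de Finetti (Hewitt--Savage) theorem for $\Bbb Z$-valued exchangeable sequences, with a Kingman-paintbox step in between. Let $\nu$ be the given ergodic $S_{\Bbb N}$-invariant measure on $\operatorname{SPart}(\Bbb N)$. First I would produce asymptotic densities of blocks. Fixing $m\in\Bbb N$, the sequence of indicators $\big(\mathbf 1[\,n\sim_{\eta}m\,]\big)_{n\ne m}$ is exchangeable under the subgroup of $S_{\Bbb N}$ that fixes $m$, so by the standard de Finetti / reverse-martingale argument the limit
$$d_{\eta}(m)=\lim_{N\to\infty}\frac1N\,\#\{n\le N:\ n\sim_{\eta}m\}$$
exists for $\nu$-almost every $\eta$; likewise $\alpha_0:=\operatorname{Prob}_{\nu}\{1\in B_0\}$ is, $\nu$-almost surely, the density of the union $B_0$ of one-point blocks. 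The sign attached to each non-singleton block is part of the datum $\eta$ and hence a measurable function of it.

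The crucial step, and the one where essentially all the work lies, is the \emph{Kingman dichotomy} for signed partitions: $\nu$-almost surely every block of $\eta$ is either a one-point block or an infinite block of strictly positive density, and no mass is lost, i.e.
$$\alpha_0+\sum_{B:\,|B|>1}d_{\eta}(B)=1.$$
Equivalently, ``thin'' blocks --- infinite but of density zero --- do not occur with positive probability. This is precisely the analogue for $\operatorname{SPart}(\Bbb N)$ of Kingman's representation theorem for exchangeable partitions; one may either invoke that theorem directly (the $\pm$-labels form a two-valued decoration of the countable, almost surely stable family of blocks and introduce no new difficulty) or re-run, in the spirit of the averaging argument used above to show that $\operatorname{Part}(l)$ carries no invariant measure, the moment identities for the block-indicator array, for instance $\operatorname{Prob}_{\nu}\{1\sim_{\eta}2,\ 1\notin B_0\}=\lim_N\tfrac1{N(N-1)}\sum_{i\ne j\le N}\mathbf 1[i\sim_{\eta}j]=\sum_B d_{\eta}(B)^2$, together with their higher-order analogues, which force all the mass to sit on the positive-density blocks.

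Granting the dichotomy, ergodicity completes the classification. The collection of pairs (density, sign) ranging over the non-singleton blocks, together with $\alpha_0$, is a measurable $S_{\Bbb N}$-invariant function of $\eta$, hence $\nu$-almost surely equal to a fixed datum. Listing the positive-block densities in non-increasing order as $\alpha_1\ge\alpha_2\ge\cdots\ge0$ (padded with zeros if only finitely many occur), the negative-block densities similarly as $\alpha_{-1}\ge\alpha_{-2}\ge\cdots\ge0$, and keeping $\alpha_0$, produces a sequence $\alpha=\{\alpha_i\}_{i\in\Bbb Z}$ obeying exactly the asserted inequalities together with $\sum_{i\in\Bbb Z}\alpha_i=1$.

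Finally I would recover the law of $\eta$ itself. Label each $n\in\Bbb N$ by the $\Bbb Z$-index of its block under the ordering just fixed, a one-point block receiving the label $0$; ties between blocks of equal density are broken in any measurable fashion, e.g.\ using an auxiliary i.i.d.\ family of uniform marks that is afterwards integrated out (this is harmless, since a signed partition carries no labelling of its blocks). The resulting $\Bbb Z$-valued sequence $c=(c_n)_{n\in\Bbb N}$ is exchangeable under $S_{\Bbb N}$, and its empirical frequencies --- being the block densities of $\eta$ --- are $\nu$-almost surely the fixed numbers $\alpha_v$; hence by the classical de Finetti theorem $c$ is i.i.d.\ with $\operatorname{Prob}\{c_n=v\}=\alpha_v$ for $v\in\Bbb Z$, that is, $c$ has law $\mu_{\alpha}$ and $\eta=\eta(c)$, so $\nu=\nu_{\alpha}$. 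Conversely each admissible $\alpha$ determines a well-defined measure $\nu_{\alpha}$ (ergodicity being the easy part established in the first half of the proof of the Theorem), and two such measures are distinguished by the multiset of their block-densities-with-signs, so $\alpha$ is a complete invariant. I expect the Kingman-dichotomy step to be the genuine obstacle; everything else is the classical de Finetti theorem plus bookkeeping about orderings and ties.
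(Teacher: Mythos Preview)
Your argument is correct, but it takes a different route from the paper's and, interestingly, the step you single out as ``the genuine obstacle'' is precisely the one the paper's method sidesteps.

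The paper does not go through a Kingman-type dichotomy at all. Instead it uses the ergodic (orbit-averaging) method of \cite{V74}: every ergodic $S_{\Bbb N}$-invariant measure $\nu$ arises as $\lim_n \frac{1}{n!}\sum_{g\in S_n}\delta_{g\eta}$ for $\nu$-a.e.\ $\eta$, so it suffices to identify these limits. For a \emph{single fixed} $\eta$ one chooses an \emph{arbitrary} labelling of its blocks by integers (positive labels for positive blocks, negative for negative, $0$ for singletons), obtaining a point $x\in{\Bbb Z}^{\Bbb N}$; the orbit of $\eta$ in $\operatorname{SPart}(\Bbb N)$ is then the image of the orbit of $x$ in ${\Bbb Z}^{\Bbb N}$ under the coordinate-permutation action, and the limiting orbit average of $\delta_x$ is computed directly to be the Bernoulli measure $\mu_\alpha$ with $\alpha_v$ the frequency of $v$ in $x$. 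Pushing forward gives $\nu=\nu_\alpha$.

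Two consequences of this difference are worth noting. First, because the labelling is fixed once on a single $\eta$ and carried along the orbit, there is no need for a globally measurable or canonical labelling, and hence no tie-breaking issue. Second, thin (zero-density) blocks are harmless: if $\eta$ happens to have one, the corresponding label $v$ simply gets $\alpha_v=0$ and contributes nothing to the limit $\mu_\alpha$; the conclusion $\nu=\nu_\alpha$ then \emph{implies} a posteriori that $\nu$-generic partitions have no thin blocks, rather than requiring it as an input. Your approach via Kingman's paintbox is more structural and yields the block dichotomy as an explicit intermediate result, at the cost of importing that theorem; the paper's orbit-average computation is shorter and self-contained but less transparent about why thin blocks disappear.
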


\begin{proof}
The lemma can be proved by any of the methods people use to prove
de Finetti's theorem. For completeness, we present a proof,
applying  our old ergodic method from \cite{V74}. In order to find all
ergodic measures $\nu$ on a
compact $S_{\Bbb N}$-space $X$ using the pointwise
ergodic theorem for the group $S_{\Bbb N}$ (which is in fact a theorem
on the convergence of martingales), it suffices to find the weak
limits of measures (when they do exist)
$$\lim \frac{1}{n!}\sum_{g \in S_n} \delta_{gx}$$
for all $x \in X$. More exactly, we need to calculate the limits
  $$\lim_n \frac{1}{n!}\sum_{g \in S_n} f(gx)$$
for continuous functions $f \in C(X)$. In our case, it suffices to
consider cylinder functions on $\operatorname{SPart}(\Bbb N)$ which depend on finitely
many blocks. Fix a signed partition $\eta$ and
label its blocks with integers in an arbitrary way so that positive
(negative)
integers correspond to positive (negative) blocks and $B_0$ is the
union of one-point blocks.
Consider the $\Bbb Z$-valued sequence $x_n$, $n \in \Bbb N$, defined
as follows:
$x_n=s$ if $n\in B_s$. Now we may say that the signed partition $\eta$
is the partition corresponding to the sequence $\{x_n\}$,
and each such sequence determines a signed partition. The action of
$S_{\Bbb N}$ on the set of signed partitions and its action by
permutations of coordinates of sequences obviously agree, so our
problem reduces to the description of $S_{\Bbb N}$-invariant measures on the space of all
elements of ${\Bbb Z}^{\Bbb N}$. But this is exactly de Finetti's
problem. Start with an arbitrary sequence $\{x_n\} \in {\Bbb Z}^{\Bbb N}$
and calculate the limit
$$\lim_n\frac{1}{n!}\#\{g\in S_n: x_{gn}=v\}=\alpha_v$$
under the assumption that it does exist (it exists for almost all $x \in X$).
Thus we obtain the one-dimensional distribution of the random (with
respect to the measure $\nu$)
sequence $x_n$.
In order to prove that this measure is a Bernoulli measure on ${\Bbb
Z}^{\Bbb N}$, we must calculate the joint distribution of several
coordinates of $x_n$. But because of the complete transitivity of the
action of $S_n$, for any choice of $v_1,\dots, v_t$ and for $n\gg t$
we have
$$\frac{\#\{g\in S_n: x_{gi}=v_i,\;i=1,2,\dots, t,\, n\gg t\}}{n!} \approx \prod_{i=1}^t \alpha_{v_i}, $$
which means that the random variables $x_n$, $n=1,2, \dots$, are
independent. Thus all AD-measures arise from Bernoulli measures
on the space of signed partitions, i.e., $\nu=\nu_{\alpha}$ for some $\alpha$.
The assertion of the theorem for the un-signed partitions is Kingman's theorem (\cite{Ki}), but
our proof is different.
\end{proof}

e) Consider the random signed Young subgroup $Y_\eta$
constructed from a sequence $\alpha$ with $\alpha_i=0$ for all $i\leq 0$. Then all blocks
$B_i$, $i\leq 0$, are empty with
probability one. Then, obviously,
the normalizer $N(Y_{\eta})$ coincides with $Y_{\eta}$, since each
block of $\eta$ gives rise to the self-normalizing subgroup $S^+(B_i)$.
Consequently, the measure $\nu_{\alpha}$ is TNF. If $\alpha_i>0$ for some
$i\leq 0$, then the corresponding block $B_i$ is not empty with
probability one, whence $N(S^-_{B_i})=S^+_{B_i}\ne S^-_{B_i}$, so
that $Y_{\eta}$ is not self-normalizing. But
$$N(Y_\eta)=\prod_{i=-\infty}^{+\infty}S_{B_i}^+.$$ Thus
$N^2(Y_{\eta})=N(Y_{\eta})$,
so that $\nu_{\alpha}$ is a RTNF measure in the terminology of
Section~1. This completes the proof of Theorem~1.
\end{proof}

\begin{corollary}
The action of the group $S_{\Bbb N}$ on the measure space $(L(S_{\Bbb N}),\nu_{\alpha})$ is ergodic.
\end{corollary}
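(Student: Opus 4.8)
The plan is to deduce the corollary directly from the construction of $\nu_{\alpha}$ used in the proof of Theorem~1. Recall that $\nu_{\alpha}$ is by definition the image of the Bernoulli measure $\mu_{\alpha}$ on $\Bbb Z^{\Bbb N}$ under the map $\Phi\colon\xi\mapsto Y_{\eta(\xi)}$, $\Phi\colon\Bbb Z^{\Bbb N}\to L(S_{\Bbb N})$. The first step is to record that $\Phi$ intertwines the coordinate action of $S_{\Bbb N}$ on $\Bbb Z^{\Bbb N}$ (permutation of the positions $n\in\Bbb N$) with the adjoint action on $L(S_{\Bbb N})$: relabelling positions by $g\in S_{\Bbb N}$ carries the signed partition $\eta(\xi)$ to $\eta(g\xi)=g\cdot\eta(\xi)$, hence $\Phi(g\xi)=Y_{g\cdot\eta(\xi)}=gY_{\eta(\xi)}g^{-1}=\operatorname{Ad}(g)\Phi(\xi)$. (Note in passing that $\Phi$ is far from injective — permuting the block labels in $\Bbb Z$ does not change $Y_{\eta(\xi)}$ — but this plays no role.)

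The second step is to invoke the ergodicity of $\mu_{\alpha}$ under the action of $S_{\Bbb N}$ on $\Bbb Z^{\Bbb N}$ by permutations of coordinates. This is the classical Hewitt--Savage zero--one law for i.i.d.\ sequences (equivalently, triviality of the exchangeable $\sigma$-field); it is precisely the fact underlying the de Finetti argument of Lemma~2, where the pointwise ergodic theorem for $S_{\Bbb N}$ is used to show that every bounded $S_{\Bbb N}$-invariant function on $(\Bbb Z^{\Bbb N},\mu_{\alpha})$ is $\mu_{\alpha}$-almost everywhere constant.

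Finally I would transport ergodicity across $\Phi$. If $B\subset L(S_{\Bbb N})$ were an $\operatorname{Ad}(S_{\Bbb N})$-invariant Borel set with $0<\nu_{\alpha}(B)<1$, then by the equivariance identity $\Phi^{-1}(B)$ would be an $S_{\Bbb N}$-invariant Borel subset of $\Bbb Z^{\Bbb N}$ with $\mu_{\alpha}(\Phi^{-1}(B))=\nu_{\alpha}(B)\in(0,1)$, contradicting the ergodicity of $\mu_{\alpha}$. Hence the adjoint action of $S_{\Bbb N}$ on $(L(S_{\Bbb N}),\nu_{\alpha})$ — which is exactly the action named in the statement — is ergodic. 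The argument presents no serious difficulty; the only points requiring care are the equivariance identity $\Phi(g\xi)=\operatorname{Ad}(g)\Phi(\xi)$ and, for a self-contained presentation, citing the Hewitt--Savage law rather than simply re-reading the phrase ``ergodic AD-measure'' from Theorem~1.
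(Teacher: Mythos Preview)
Your proof is correct and follows exactly the paper's approach: the paper simply remarks that $\nu_{\alpha}$ is the image of the Bernoulli measure $\mu_{\alpha}$ under a map that commutes with the $S_{\Bbb N}$-action, and hence inherits its ergodicity. Your write-up spells out the equivariance $\Phi(g\xi)=\operatorname{Ad}(g)\Phi(\xi)$ and the transport-of-ergodicity step that the paper leaves implicit, but the argument is the same.
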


Indeed, this is a corollary of the fact that the measure
$\nu_{\alpha}$ is the image of the Bernoulli measure $\mu_{\alpha}$
and the correspondence $\mu_{\alpha}\mapsto\nu_\alpha$
between measures commutes with the action of the
group.
The corresponding representation of the group $S_{\Bbb N}$ in the
space $L^2_{\nu_{\alpha}}(L(S_{\Bbb N}))$
will be considered elsewhere.

\begin{corollary}
There are three degenerate measures $\nu_{\alpha}$, in the following
cases (in the parentheses we indicate the corresponding character and
representation, see below):

{\rm 1)} $\alpha_1=1$, $\alpha_i=0$, $i\ne 1$; in this case, $\nu_{\alpha}=\delta_{S_{\Bbb N}}$
($\chi(g)\equiv 1$, the identity representation);

{\rm 2)} $ \alpha_{-1}=1$, $\alpha_i=0$, $i\ne -1$; in this case,  $\nu_{\alpha}=\delta_{S^-_{\Bbb N}}$
($\chi(g)=(-1)^{\operatorname{sgn}(g)}$, the alternating
representation);

{\rm 3)} $\alpha_0=1$, $\alpha_i=0$, $i\ne 0$; in this case,
$\nu_{\alpha}=\delta_{{\rm Id}_{S_{\Bbb N}}}$
($\chi(g)=\delta_e(g)$, the regular representation).

An ergodic AD-measure $\nu_{\alpha}$ is atomic only in these
three cases (in which it is if fact a $\delta$-measure);
 in all the other cases, $\nu_{\alpha}$ is a continuous measure.
\end{corollary}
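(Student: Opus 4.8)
\medskip
\noindent\textbf{Proof plan.}
The statement has two halves: the explicit evaluation of $\nu_\alpha$ in the three listed cases, and the claim that these are the only atomic instances. The first half I would settle by reading off $\nu_\alpha$ directly from its construction. If $\alpha_1=1$ (so $\alpha_i=0$ for $i\neq1$), then $\mu_\alpha$-almost surely $\xi_n=1$ for all $n$, the random signed partition $\eta(\xi)$ has the single positive block $\Bbb N$, and $Y_{\eta(\xi)}=S^+(\Bbb N)=S_{\Bbb N}$; hence $\nu_\alpha=\delta_{S_{\Bbb N}}$. Likewise $\alpha_{-1}=1$ yields a single negative block and $\nu_\alpha=\delta_{S^-_{\Bbb N}}$, while $\alpha_0=1$ forces $\xi_n=0$ for all $n$, so that $\eta(\xi)$ consists of one-point blocks only and $Y_{\eta(\xi)}=\{e\}$, i.e.\ $\nu_\alpha=\delta_{{\rm Id}_{S_{\Bbb N}}}$. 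Identifying the associated character in each case (the normalized trace of the corresponding representation) is then immediate, and I would only indicate it.

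For the second half I would show that an atomic $\nu_\alpha$ is necessarily a $\delta$-measure at a normal subgroup. The inputs are that $\nu_\alpha$ is ergodic under $\operatorname{Ad}(S_{\Bbb N})$ (Theorem~1 and Corollary~1) and that $\nu_\alpha$-almost every subgroup is a signed Young subgroup all of whose non-singleton blocks are infinite. Suppose $\nu_\alpha$ has an atom. For each $c>0$ the set of atoms of mass exactly $c$ is finite (the masses are nonnegative and sum to $1$) and $\operatorname{Ad}$-invariant (conjugation preserves $\nu_\alpha$, hence atom masses); taking $c$ to be the largest atom mass, ergodicity forces this finite invariant set to have full measure, and, being a disjoint union of finite $\operatorname{Ad}$-orbits each carrying positive mass, it must be a single orbit $\mathcal O$ with $|\mathcal O|\le 1/c<\infty$. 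For $H_0\in\mathcal O$, the stabilizer of $H_0$ under the adjoint action is $N(H_0)$, which therefore has finite index in $S_{\Bbb N}$. Writing $H_0=Y_\eta$ with all non-singleton blocks infinite, the normalizer computation in the proof of Theorem~1 gives $N(Y_\eta)=\prod_i S^+(B_i)$ over the non-singleton blocks, together with the full symmetric group on $B_0$ when $B_0\neq\varnothing$. Such a group has finite index in $S_{\Bbb N}$ only if it equals $S_{\Bbb N}$: the only finite-index subgroups of the finitary group $S_{\Bbb N}$ are $S_{\Bbb N}$ and $S^-_{\Bbb N}$ (the normal core of a finite-index subgroup is a finite-index normal subgroup, and the only normal subgroups of $S_{\Bbb N}$ are $\{e\}$, $S^-_{\Bbb N}$, $S_{\Bbb N}$, of which only the latter two have finite index), and $\prod_i S^+(B_i)$ is never $S^-_{\Bbb N}$, being either trivial or a subgroup containing a transposition. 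Hence $\eta$ has either no non-singleton block ($H_0=\{e\}$) or a single one equal to $\Bbb N$ ($H_0=S^\pm(\Bbb N)$); in every case $H_0$ is normal, $|\mathcal O|=1$, and $\nu_\alpha=\delta_{H_0}$ with $H_0\in\{\{e\},S^-_{\Bbb N},S_{\Bbb N}\}$.

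It remains to recover $\alpha$ from $\nu_\alpha=\delta_{H_0}$. Since each value $v$ with $\alpha_v>0$ is taken by infinitely many of the $\xi_n$ with probability one (law of large numbers), the identity $Y_{\eta(\xi)}=H_0$ holding $\mu_\alpha$-almost surely forces a single $\alpha_v$ to equal $1$: $v=0$ if $H_0=\{e\}$, $v>0$ if $H_0=S_{\Bbb N}$, $v<0$ if $H_0=S^-_{\Bbb N}$. The monotonicity hypotheses on $\alpha$ together with $\sum_i\alpha_i=1$ then force $v\in\{-1,0,1\}$, so $\alpha$ is exactly one of the three listed sequences; in every other case $\mu_\alpha$ is non-atomic, hence so is $\nu_\alpha$, and $\nu_\alpha$ is a continuous measure. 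The step I expect to require the most care is the normalizer identification $N(Y_\eta)=\prod_i S^+(B_i)$ and, in particular, the correct handling of the one-point blocks of $B_0$ and of the borderline case $B_0=\Bbb N$; the rest is a routine use of the ergodic dichotomy for atoms and of the elementary subgroup structure of $S_{\Bbb N}$.
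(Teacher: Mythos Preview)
The paper states this corollary without proof, leaving it as an immediate consequence of Theorem~1 and the explicit construction of $\nu_\alpha$ as the image of the Bernoulli measure $\mu_\alpha$. Your argument is correct and complete.

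Your route to the atomicity claim passes through the classification of finite-index subgroups of $S_{\Bbb N}$ (via the normal core and the list of normal subgroups). This is valid but slightly heavier than what the construction already hands you. A more direct shortcut, presumably what the author has in mind: once ergodicity forces $\nu_\alpha$ to be concentrated on a single finite $\operatorname{Ad}$-orbit $\{Y_{g\eta}:g\in S_{\Bbb N}\}$ of a signed Young subgroup whose non-singleton blocks are all infinite, observe directly that this orbit is infinite as soon as $\eta$ has two non-empty blocks $B\neq B'$ (counting $B_0$ as a single block). Indeed, fixing $b\in B$ and letting $b'$ range over $B'$, the transpositions $(b,b')$ send $\eta$ to pairwise distinct signed partitions, and on partitions with only infinite non-singleton blocks the map $\eta\mapsto Y_\eta$ is injective (the blocks are recovered as the orbits of $Y_\eta$, the sign from whether the action on a block is symmetric or alternating). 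Hence $\eta$ has a single block equal to $\Bbb N$, giving $Y_\eta\in\{S_{\Bbb N},S^-_{\Bbb N},\{e\}\}$ according to its sign; the recovery of $\alpha$ then proceeds exactly as you wrote. Your approach via the normalizer formula and finite-index subgroups reaches the same endpoint with one extra (standard) input.

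One minor inaccuracy: the normalizer $\prod_i S^+(B_i)$, with the product taken over all blocks including $B_0$, is never trivial, since the $B_i$ partition the infinite set $\Bbb N$; when all non-singleton blocks are empty one has $B_0=\Bbb N$ and the product is $S_{\Bbb N}$ itself. Your dichotomy ``trivial or contains a transposition'' should therefore read ``equals $S_{\Bbb N}$ or is a proper subgroup containing a transposition'', but this does not affect the conclusion.
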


\subsection{REMARKS AND A FORMULA FOR THE MEASURES OF THE SETS OF FIXED POINTS}
\smallskip
    Make sense to compare the language which we use here (the action on $L(G)$)
 with that which was used in \cite{VK} (the action on the Bernoulli sequences).

 More concretely, consider the action of $S_{\Bbb N}$ on the space ${\Bbb Z}^{\Bbb N}$
 (instead of $L(S_{\Bbb N})$) and ask for a description of TNF and RTNF measures. The answer is a
 little bit different than for the action
 on the space of Young subgroups. Namely, the following result holds.

\begin{statement}
The measure $\mu_{\alpha}$ on the space $X={\Bbb Z}^{\Bbb N}$
with the action of the group $S_{\Bbb N}$
is a TNF measure if and only if all $\alpha_i$, $i\ne 0$, are
distinct. If $\alpha_i=\alpha_j$
for some $i\ne j$, then the action of $S_{\Bbb N}$ is RTNF but not TNF.
The canonical projection $X \rightarrow X/{\xi_G}$ is the
factorization with respect to the following equivalence relation
on  $X={\Bbb Z}^{\Bbb N}$:
two elements $\{x_n\}_{n\in\Bbb N},\{x'_n\}_{n\in
\Bbb N}\in X$ are equivalent for if for every $v\in\Bbb Z$ either
$\{n \in {\Bbb N}: x_n=v\}=\{n\in {\Bbb N}: x'_n=v\}$, or
there exists $v'\in \Bbb Z$
with $\alpha_v=\alpha_{v'}$ such that
$$\{n \in {\Bbb N}: x_n=v\}=\{n\in {\Bbb N}: x'_n=v'\},$$
and
$$\{n \in {\Bbb N}: x_n=v'\}=\{n\in {\Bbb N}: x'_n=v\}.$$
\end{statement}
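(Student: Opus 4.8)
The plan is to compute explicitly the stabilizer $G_x = (S_{\Bbb N})_x$ of a generic point $x = \{x_n\}_{n\in\Bbb N}\in{\Bbb Z}^{\Bbb N}$ under the action of $S_{\Bbb N}$ by permutation of coordinates, and then to compare stabilizers of different points, exactly as condition~2 of Definition-Theorem~2 requires. First I would observe that $g\in S_{\Bbb N}$ fixes $x$ iff $x_{g(n)} = x_n$ for all $n$, i.e.\ iff $g$ preserves each level set $B_v(x) := \{n:x_n=v\}$; since $g$ is a \emph{finite} permutation, it must act as a finite permutation inside each $B_v(x)$, and (because $\mu_\alpha$ is Bernoulli with $\alpha_i>0$ on the support) for $\mu_\alpha$-a.e.\ $x$ every block $B_v(x)$ with $\alpha_v>0$ is infinite. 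Hence for a.e.\ $x$, $$G_x = \prod_{v:\,\alpha_v>0} S^+\bigl(B_v(x)\bigr),$$ the product of the full finite-symmetric groups on the level sets; here $B_0(x)$ contributes nothing since points with $x_n=0$ are moved around freely only if they are grouped with others of the same $\alpha$-value — I will return to the role of $v=0$ below. Note this identifies $G_x$ with the signed Young subgroup $Y_{\eta(x)}$ where $\eta(x)$ is the ordinary partition into level sets (no negative blocks appear here, in contrast to the $L(S_{\Bbb N})$ picture), which already makes the map $x\mapsto G_x$ transparent.

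Next I would read off when two a.e.\ points have equal stabilizers. Since $G_x = \prod_{v:\alpha_v>0} S^+(B_v(x))$ is determined precisely by the \emph{unordered} collection of infinite level sets $\{B_v(x):\alpha_v>0\}$, we have $G_x = G_{x'}$ iff that collection of subsets of $\Bbb N$ is the same for $x$ and $x'$ — that is, iff there is a permutation $\sigma$ of the index set such that $B_v(x) = B_{\sigma(v)}(x')$ for all $v$. Because the cardinalities (here, the property of being infinite versus finite versus a singleton) are the only coarse invariant and all the relevant blocks are infinite a.e., such a relabeling $\sigma$ is unconstrained by block size; but for $x$ and $x'$ to both lie in the support and have a genuine chance to be relabelings of one another the relabeling need not preserve $\alpha$-values a priori — however, on sets of positive $\mu_\alpha\times\mu_\alpha$ measure the matching $B_v(x)=B_{v'}(x')$ forces the two coordinates to be swapped globally, and this happens with positive measure exactly when there is a symmetry $\alpha_v=\alpha_{v'}$ available (otherwise an independent pair $x,x'$ almost surely has all level sets distinct, so $G_x=G_{x'}$ only on the diagonal, by independence of the $\xi_n$). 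This gives: the action is TNF (condition~2 holds) iff no nontrivial relabeling can occur with positive measure, i.e.\ iff all $\alpha_i$, $i\neq 0$, are distinct; and if $\alpha_i=\alpha_j$ for some $i\neq j$ then swapping the roles of $i$ and $j$ globally produces, with positive probability, a distinct point $x'\neq x$ with $G_{x'}=G_x$, so the action fails TNF.

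It then remains to identify the iso-stable partition $\xi_G$ and verify the RTNF claim. From the computation of $G_x$, the equivalence $x\sim x'$ defined by $G_x=G_{x'}$ is exactly: for each $v\in\Bbb Z$, either $B_v(x)=B_v(x')$, or there is $v'$ with $\alpha_v=\alpha_{v'}$ and $B_v(x)=B_{v'}(x')$ together with $B_{v'}(x)=B_v(x')$ — which is precisely the relation written in the statement (the case distinction with the two displayed equalities). This is the quotient map $X\to X/\xi_G$. For RTNF I must check that the reduced action (the base of the canonical skew product, i.e.\ the action on $X/\xi_G$ with the pushforward measure) is TNF, equivalently that $N^2(G_x)=N(G_x)$ a.e. Now $N(G_x)$ in $S_{\Bbb N}$ is the group of finite permutations preserving the partition $\{B_v(x):\alpha_v>0\}$ \emph{up to permuting blocks of equal cardinality}; but all these blocks are infinite, so $N(G_x) = \{g\in S_{\Bbb N}: g\text{ permutes the set }\{B_v(x):\alpha_v>0\}\text{ among themselves}\}$, and since a finite permutation can only permute infinitely many blocks trivially (it moves finitely many points), in fact $N(G_x)=\prod_v S^+(B_v(x))\rtimes(\text{finite block-permutations})$ but the block-permutation part is forced to be trivial because distinct infinite blocks cannot be interchanged by a finite permutation — hence $N(G_x)=G_x$ itself once we are careful, OR $N(G_x)$ differs from $G_x$ only by the action on $B_0$. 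I expect the main obstacle to lie exactly here: pinning down the normalizer and its iteration, in particular handling the block $B_0$ of coordinates equal to $0$ and the finitely-supported "block-swapping" elements, to show cleanly that $N^2=N$ everywhere on the support while $N\neq\mathrm{Id}$ precisely in the degenerate-$\alpha$ cases. Once the structure of $N(G_x)$ is correctly described, the identity $N^2(G_x)=N(G_x)$ follows because $N(G_x)$ is already of the form $\prod_v S^+(C_v)$ for a partition $\{C_v\}$ into infinite blocks (a genuine, unsigned Young subgroup), and such subgroups are self-normalizing by the same argument as in part~(e) of the proof of Theorem~1 — giving RTNF in all cases and TNF exactly when the relabeling group is trivial, i.e.\ when all $\alpha_i$ with $i\neq0$ are distinct.
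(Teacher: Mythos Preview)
The paper states this proposition without proof (it appears as a remark in Section 2.4), so there is no argument to compare against; I assess yours directly.

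Your strategy is the right one: compute $G_x=\prod_v S^+(B_v(x))$, observe that this determines only the \emph{unordered} partition $\{B_v(x)\}$, and decide when the labels $v$ can be reconstructed from that partition. The point you rely on but never state is the density argument: by the law of large numbers, for $\mu_\alpha$-a.e.\ $x$ each level set $B_v(x)$ has asymptotic density exactly $\alpha_v$ in $\Bbb N$, so on a conull set the label $v$ is recoverable from the block $B_v(x)$ precisely when no two $\alpha_v$ coincide. This is why the constraint $\alpha_v=\alpha_{v'}$ enters the description of the iso-stable equivalence, and it is what drives both directions of the TNF criterion. Make it explicit. (Incidentally, on ${\Bbb Z}^{\Bbb N}$ the value $0$ is not special: $S^+(B_0(x))$ sits inside $G_x$ like any other factor. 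Your hesitation about $B_0$ is unfounded; the exclusion $i\neq 0$ in the statement is a convention inherited from the $L(S_{\Bbb N})$ picture, not an intrinsic feature here.)

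One step is aimed at the wrong target. When $\alpha_i=\alpha_j$ you try to produce equal-stabilizer pairs of positive $\mu_\alpha\times\mu_\alpha$-measure, but you will not: the swap $\sigma$ of values $i\leftrightarrow j$ is a measure-preserving involution on $X$ with $G_{\sigma x}=G_x$ and $\sigma x\neq x$ almost everywhere, yet the graph $\{(x,\sigma x)\}$ is $\mu_\alpha\times\mu_\alpha$-null. What this \emph{does} show is that $\Psi:x\mapsto G_x$ is at least two-to-one on a conull set, hence not a mod~$0$ isomorphism. Argue via condition~3 (or condition~1) of Definition-Theorem~1, not via condition~2.

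For RTNF you circle unnecessarily. You already observe the decisive fact: a finite permutation cannot interchange two disjoint infinite subsets of $\Bbb N$, so $N(G_x)=G_x$ for a.e.\ $x$. That finishes it. The characteristic measure $\Psi_*\mu_\alpha$ is then supported on self-normalizing subgroups, so by Proposition~4, part~3, the adjoint action on $(L(S_{\Bbb N}),\Psi_*\mu_\alpha)$ is TNF, i.e.\ the reduced action is TNF, i.e.\ the original action is RTNF. Drop the hedge about ``$N(G_x)$ differs from $G_x$ only by the action on $B_0$'' and the semidirect-product digression; there is nothing to iterate.
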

 Thus, in this case the action is RTNF not TNF if we have multiplicity in the values
 of $\alpha$: $\alpha_i=\alpha_j, i\ne j$; this is not the case for the action in $L(S_{\Bbb N})$.

The supports of the measures $\nu_{\alpha}$ in the topological sense
(i.e., the minimal closed subsets of full measure) coincide for all $\alpha$
that have the same number of infinite blocks. The support
of $\nu_{\alpha}$ for $\alpha$ having infinitely many infinite blocks
coincides with the space of all signed Young subgroups.

In the case of the infinite symmetric group, all AD-measures are invariant under the group $S^{\Bbb N}$ of all permutations
of $\Bbb N$. The conjugation with respect to this group is an
extension of the usual conjugation; but for a generic subgroup
$H$ from a set $A$ of full $\nu_{\alpha}$-measure, its orbit under the action of
$S^{\Bbb N}$ is much larger than $A$.
In other words, the frequencies $\alpha_i$ are invariant under the
usual conjugation, but not under its extension. This fact is related
to the so-called
Kolmogorov effect (see \cite{kol}).

Theorem 1 gives more than just the list of AD-measures on the
group $S_{\Bbb N}$; it helps to give a new proof of
Thoma's formula for indecomposable characters of this group.
This will be the subject of our next article,
and now we merely carry out the calculations and give a short commentary.
Here we present the formula for characters in the ``positive''
case.\footnote{For convenience, we have sightly changed the
notation: usually, $\alpha_i\equiv \beta_i$ for $i<0$, and $\alpha_0\equiv\gamma$.}

 \begin{theorem}
 For an ergodic AD-measure $\nu_{\alpha}$,
$$\nu_{\alpha}(F_g)\equiv \nu_{\alpha}\{H:gHg^{-1}=H\}=\nu_{\alpha}\{H:g\in N(H)\}=\prod_{n>1} [p_n(\alpha)]^{c_n(g)}, $$
where $$p_n(\alpha)=\sum_{i\ne 0} \alpha_i^n$$ (Newton's power sum)
and $c_n(g)$ is the number of cycles of length $n>1$ of a permutation $g$.
\end{theorem}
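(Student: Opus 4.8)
The plan is to push the whole question down to an elementary Bernoulli computation on $\Bbb Z^{\Bbb N}$. By definition $N(H)=\{g:gHg^{-1}=H\}$, so $F_g=\{H:gHg^{-1}=H\}$ is tautologically the same as $\{H:g\in N(H)\}$ (this is the first equality in the statement); and since $\nu_{\alpha}$ is by construction the image of the Bernoulli measure $\mu_{\alpha}$ under the map $\xi\mapsto Y_{\eta(\xi)}$, we have
$$\nu_{\alpha}(F_g)=\mu_{\alpha}\{\xi\in\Bbb Z^{\Bbb N}:\ g\in N(Y_{\eta(\xi)})\}.$$
So everything reduces to identifying this event with a cylinder event and evaluating its probability.

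First I would fix the almost sure shape of $Y_{\eta(\xi)}$. Since the $\xi_n$ are i.i.d.\ with $\Pr\{\xi_n=i\}=\alpha_i$, each block $B_i=\{n:\xi_n=i\}$ with $\alpha_i>0$ has positive density and is therefore infinite for $\mu_{\alpha}$-almost all $\xi$; in the positive case treated here, where $\alpha_0=0$, the blocks $B_i$, $i\neq0$, partition $\Bbb N$ modulo a $\mu_{\alpha}$-null set. Hence, $\mu_{\alpha}$-a.s., $Y_{\eta(\xi)}=\prod_{i>0}S^+(B_i)\times\prod_{i<0}S^-(B_i)$ is a restricted direct product of groups acting on pairwise disjoint infinite sets, and distinct factors are distinct as subgroups: for infinite $B$ the group $S^{\pm}(B)$ determines its support $B$, and $S^+(B)$ is never an alternating group $S^-(B')$.

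The heart of the proof is the claim that, $\mu_{\alpha}$-a.s., $g\in N(Y_{\eta(\xi)})$ if and only if $g$ maps every block $B_i$ to itself, equivalently $\xi_{g(n)}=\xi_n$ for all $n$, equivalently $\xi$ is constant along every cycle of $g$. For one direction: conjugation by $g$ carries the factor $S^{\pm}(B_i)$ to $S^{\pm}(gB_i)$, preserving the sign-type, so by the rigidity of the factorization noted above a $g$ normalising $Y_{\eta(\xi)}$ must permute the blocks $\{B_i\}_{i\neq0}$ among themselves, respecting signs. But $g$ is a \emph{finite} permutation, so $gB_i$ differs from $B_i$ in only finitely many points, while the $B_i$ are infinite and pairwise disjoint; hence $gB_i$ can equal no block other than $B_i$, and $gB_i=B_i$ for every $i\neq0$. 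Conversely, if $g$ fixes each $B_i$ then $gS^+(B_i)g^{-1}=S^+(B_i)$ and $gS^-(B_i)g^{-1}=S^-(gB_i)=S^-(B_i)$, since conjugation preserves both the full symmetric group and the alternating group of a set; so $g$ normalises $Y_{\eta(\xi)}$. This step is the one I expect to be the main obstacle: one must be careful about which groups $S^{\pm}(B)$ can coincide and about the tension between ``$g$ finite'' and ``blocks infinite''. It is also exactly where the positive-case hypothesis enters --- a cycle of $g$ lying entirely inside $B_0$, where $Y_{\eta}$ acts trivially and $g$ would still normalise $Y_{\eta}$, is ruled out by $\alpha_0=0$; for general $\alpha$ the analogous factors below would carry an extra summand $\alpha_0^n$.

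Finally, the computation: by the previous step
$$\nu_{\alpha}(F_g)=\mu_{\alpha}\{\xi:\ \xi\text{ is constant on each cycle of }g\},$$
and by independence of the $\xi_n$ this probability factors over the cycles of $g$. A cycle of length $n$ on points $a_1,\dots,a_n$ contributes $\Pr\{\xi_{a_1}=\dots=\xi_{a_n}\}=\sum_{v\in\Bbb Z}\alpha_v^n$, which in the positive case equals $\sum_{i\neq0}\alpha_i^n=p_n(\alpha)$; for $n=1$ this equals $\sum_v\alpha_v=1$, so the (infinitely many) fixed points of $g$ contribute nothing and only cycles of length $n>1$ survive. Grouping the cycles by length yields $\nu_{\alpha}(F_g)=\prod_{n>1}p_n(\alpha)^{c_n(g)}$. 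Together with the ergodicity of $\nu_{\alpha}$, this pointwise formula is what will later let one read off a character of $S_{\Bbb N}$ from the measure $\nu_{\alpha}$.
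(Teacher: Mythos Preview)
Your argument is correct. The paper does not actually supply a proof of this theorem: it is presented as a computation whose details are deferred to a subsequent article, the only relevant ingredient already in hand being the identity $N(Y_\eta)=\prod_{i}S^+(B_i)$ from part~e) of the proof of Theorem~1. That identity is precisely the description of the normalizer you re-derive in your main step, so your route coincides with what the paper implicitly has in mind: push the question forward to the Bernoulli model via $\nu_\alpha=\Psi_*\mu_\alpha$, identify $\{g\in N(Y_{\eta(\xi)})\}$ with the cylinder event that $\xi$ is constant along every cycle of $g$, and factorize over cycles by independence of the $\xi_n$.

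Your treatment of the $\alpha_0$ issue is in fact sharper than the paper's own statement. As written, $p_n(\alpha)=\sum_{i\ne 0}\alpha_i^n$ gives the correct value of $\nu_\alpha(F_g)$ only when $\alpha_0=0$; in general a cycle of $g$ contained entirely in $B_0$ still normalizes $Y_\eta$ (since $Y_\eta$ acts trivially on $B_0$), so the correct factor per $n$-cycle is $\sum_{v\in\Bbb Z}\alpha_v^n=p_n(\alpha)+\alpha_0^n$. You flag this explicitly, whereas the paper leaves the scope of the formula somewhat ambiguous under the heading ``positive case.''
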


In the case where $\alpha_i=0$ for $i < 0$, this formula coincides
with Thoma's formula \cite{VK1} for characters of the infinite symmetric group,
because the measure of the set of fixed points is equal to the value
of the character: $$\chi_{\alpha}(g)= \nu_{\alpha}(\operatorname{Fix}(g)).$$ In
the general case, Thoma's formula involves super-Newton instead of
Newton sums:
$$p_n(\alpha)=\sum_{i>0}\alpha_i^n+(-1)^{n-1}\sum_{i<0}\alpha_i^n=
\sum_{i\ne 0}(\operatorname{sgn} i)^{n-1}\alpha_i^n.$$

The measure of the set of fixed points does not depend on the types of
blocks, but for a general parameter $\alpha$, the value of the character
is not equal just to the measure of this set, the
formula involving a certain multiplier (see \cite{VK}).
 We will return to this question and give a model of representations in the next article.

\end{document}